\numberwithin{equation}{section}
\numberwithin{table}{section}
\numberwithin{figure}{section}
\def\cf{cf.~}
\newcommand{\hsp}[1]{{\hbox{\hspace{#1}}}}
\newcounter{letcnt} 
\def\a{\alpha}  
\def\d{\delta}
\def\m{\mu}
\def\n{\nu}
\def\w{\omega}
\def\tAut{\mathrm{Aut}}
\def\fb{\mathfrak{b}} 
\def\bC{\mathbb C} \def\cC{\mathcal C}
 \def\sD{\mathscr{D}}
\def\td{\mathrm{d}}
 \def\tdim{\mathrm{dim}}
 \def\ttE{\mathtt{E}}
\def\tEnd{\mathrm{End}}
\def\texp{\mathrm{exp}}
\def\sF{\mathscr{F}}
  \def\ttF{\mathtt{F}}
\def\tGr{\mathrm{Gr}}
\def\fg{{\mathfrak{g}}}
\def\fh{\mathfrak{h}}
\def\tti{\mathtt{i}}
\def\tmax{\mathrm{max}} \def\tmin{\mathrm{min}}
\def\fn{\mathfrak{n}}
 \def\cO{\mathcal O}
 \def\tOG{\mathrm{OG}}
\def\bP{\mathbb P}
\def\fp{\mathfrak{p}}
 \def\cQ{\mathcal Q} 
\def\fq{\mathfrak{q}}
\def\sS{\mathscr{S}}
\def\ttS{\mathtt{S}} 
\def\fs{\mathfrak{s}}
\def\tss{\mathrm{ss}}
\def\tSL{\mathrm{SL}} \def\tSO{\mathrm{SO}}
\def\tSp{\mathrm{Sp}} \def\tSpin{\mathit{Spin}}
 \def\tSym{\mathrm{Sym}}
 \def\tspan{\mathrm{span}}
\def\fsl{\mathfrak{sl}} \def\fso{\mathfrak{so}} 
\def\fsp{\mathfrak{sp}} 
\def\cT{\mathcal T}  
 \def\ttT{\mathtt{T}}
 \def\sW{\mathscr{W}}
   \def\bZ{\mathbb Z}
\def\fz{\mathfrak{z}} 
\def\half{\tfrac{1}{2}}
\def\tand{\quad\hbox{and}\quad}
\def\dfn{\stackrel{\hbox{\tiny{dfn}}}{=}}
\def\sb{{\hbox{\tiny{$\bullet$}}}}
\def\inj{\hookrightarrow}
\def\op{\oplus}
\newcounter{numcnt}
\newcounter{cnt}
\newcounter{acnt}
\newenvironment{a_list_emph}{ 
  \begin{list}{{\emph{(\alph{acnt})}}}
   {\usecounter{acnt} \setlength{\itemsep}{3pt}
    \setlength{\leftmargin}{25pt} 
    \setlength{\labelwidth}{20pt}
    \setlength{\listparindent}{20pt} }
   }
   {\end{list}}
\newenvironment{a_list_bold}{ 
  \begin{list}{{\bf{(\alph{acnt})}}}
   {\usecounter{acnt} \setlength{\itemsep}{3pt}
    \setlength{\leftmargin}{25pt} 
    \setlength{\labelwidth}{20pt}
    \setlength{\listparindent}{20pt} }
   }
   {\end{list}}
\newcounter{Acnt}
\newcounter{icnt}
\newcounter{Icnt}
\newcounter{exam_cnt}
\newcounter{mccnt}
\newenvironment{bcirclist}{ 
  \begin{list}{\boldmath$\circ$\unboldmath}
   {\usecounter{cnt} \setlength{\itemsep}{2pt}
    \setlength{\leftmargin}{15pt} \setlength{\labelwidth}{20pt}
    \setlength{\listparindent}{20pt} }
   }
   {\end{list}}
\newtheorem{corollary}[equation]{Corollary}
\newtheorem{lemma}[equation]{Lemma}
\newtheorem{proposition}[equation]{Proposition}
\newtheorem{theorem}[equation]{Theorem}
\theoremstyle{definition}
\newtheorem*{boldQ*}{Question}
\newtheorem*{boldP*}{Problem}
\theoremstyle{definition}
\theoremstyle{remark}
\newtheorem*{assume*}{Assume}
\newtheorem*{answer*}{Answer}
\newtheorem*{claim*}{Claim}
\newtheorem*{definition*}{Definition}
\newtheorem{example}[equation]{Example}
\newtheorem*{example*}{Example}
\newtheorem*{hint*}{Hint}
\newtheorem*{notation*}{Notation}
\newtheorem{remark}[equation]{Remark}
\newtheorem*{remark*}{Remark}
\newtheorem*{remarks*}{Remarks}
\newtheorem*{fact*}{Fact}
\newtheorem*{emphL*}{Lemma}
\newtheorem*{emphQ*}{Question}
\newtheorem*{emphA*}{Answer}
\theoremstyle{theorem}
\newtheorem*{mt}{Theorem \ref{T:smooth}}
\begin{document}

\title{Classification of smooth horizontal Schubert varieties}

\author[Kerr]{Matt Kerr}
\email{matkerr@math.wustl.edu}
\address{Department of Mathematics, Washington University in St. Louis, Campus Box 1146, One Brookings Drive, St. Louis, MO 63130-4899}
\thanks{Kerr is partially supported by NSF grants DMS-1068974, 1259024, and 1361147.}

\author[Robles]{Colleen Robles}
\email{robles@math.duke.edu}
\address{Mathematics Department, Duke University, Box 90320, Durham, NC  27708-0320} 
\thanks{Robles is partially supported by NSF grants DMS 02468621 and 1361120.  This work was undertaken while Robles was a member of the Institute for Advanced Study; she thanks the institute for a wonderful working environment and the Robert and Luisa Fernholz Foundation for financial support.}

\date{\today}

\begin{abstract}
We show that the smooth horizontal Schubert subvarieties of a rational homogeneous variety $G/P$ are homogeneously embedded cominuscule $G'/P'$, and are classified by subdiagrams of a Dynkin diagram.  This generalizes the classification of smooth Schubert varieties in cominuscule $G/P$. 
\end{abstract}

\keywords{Schubert varieties, rational homogeneous varieties, flag manifolds}
\subjclass[2010]
{
 14M15. 
}

\maketitle



\section{Introduction}

The main result of this paper is a characterization of the smooth Schubert varieties of a rational homogeneous variety $G/P$ that are integrals of a canonically defined subbundle of $T(G/P)$.  Here $G$ is a complex, semisimple Lie group, $P$ is a parabolic subgroup of $G$, and $T(G/P)$ is the holomorphic tangent bundle.  Every rational homogeneous variety $G/P$ admits a unique minimal, bracket--generating, $G$--homogeneous subbundle $T^1 \subset T(G/P)$ of the (holomorphic) tangent bundle.  The variety $G/P$ is cominuscule if and only if $T^1 = T(G/P)$; equivalently, $G/P$ admits the structure of a Hermitian symmetric space.  These are the simplest rational homogeneous varieties; examples include the Grassmannian $\tGr(k,\bC^n)$ of $k$--planes in $\bC^n$.  From the work of Lakshmibai--Weyman, Brion--Polo, and J.~Hong we have 

\begin{theorem}[{\cite{MR1703350, MR2276624, MR1080976}}] \label{T:comin}
In the case that $G/P$ is cominuscule, the smooth Schubert varieties $X \subset G/P$ are homogeneously embedded cominuscule $G'/P'$, and are indexed by subdiagrams of the Dynkin diagram $\sD$ of $G$.
\end{theorem}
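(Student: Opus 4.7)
The plan is to exploit that Schubert varieties in $G/P$ are parameterized by the set $W^P$ of minimal-length coset representatives of $W/W_P$, and that for cominuscule $P$ this set is in bijection with the lower order ideals of the poset $\Phi(P) \subset \Phi^+$ consisting of those positive roots with nonzero coefficient on the cominuscule simple root $\alpha_P$. Fix a Borel $B \subset P$ containing a maximal torus $T$, and write $X_w = \overline{B\,wP/P}$ for the Schubert variety indexed by $w \in W^P$, so that $\tdim X_w = |\Phi(w)|$, where $\Phi(w) := \Phi(P) \cap w\Phi^-$ is the associated lower ideal.

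First I would reduce smoothness of $X_w$ to smoothness at the single $T$-fixed point $eP/P$. The singular locus of $X_w$ is closed and $B$-stable, hence a union of Schubert subvarieties $X_v$ with $v < w$, and every such $X_v$ contains $eP/P$; so $X_w$ is smooth if and only if it is smooth at $eP/P$. Under the root-space identification $T_{eP}(G/P) \cong \bigoplus_{\alpha \in \Phi(P)} \fg_{-\alpha}$, the $T$-weights of $T_{eP}X_w$ are those $-\alpha$ with $s_\alpha \leq w$ in Bruhat order, by Kumar's tangent space formula (which collapses to a clean, multiplicity-free statement in the cominuscule setting). Smoothness at $eP/P$ becomes the combinatorial identity
\[
\bigl\{\alpha \in \Phi(P) : s_\alpha \leq w\bigr\} \;=\; \Phi(w).
\]

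The central step, which I expect to be the main obstacle, is to show that this identity holds precisely when $\Phi(w) = \Phi(P')$, where $\sD' \subset \sD$ is a connected subdiagram containing the cominuscule node $\alpha_P$, $G' \subset G$ is the subgroup generated by $T$ and the root subgroups indexed by $\sD'$, and $P' = P \cap G'$ is the corresponding maximal parabolic. I would verify this case-by-case using the explicit combinatorics of the distributive lattice $\Phi(P)$ in each cominuscule type: rectangular sub-diagrams of the $k \times (n-k)$ rectangle for $\tGr(k,\bC^n)$; shifted-staircase subshapes for the Lagrangian and spinor varieties; quadric-type posets for the even- and odd-dimensional quadrics; and the two exceptional Hermitian symmetric spaces $E_6/P_1$ and $E_7/P_7$, whose posets are small enough to check directly. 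The forward direction is immediate; the reverse amounts to recognizing that any lower ideal whose "upward closure under $\leq_{\mathrm{Bruhat}}$" coincides with itself must be a sub-rectangle, and this is what requires the case analysis.

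Once this dictionary is established, the conclusion is essentially formal. The subgroup $G'$ preserves $X_w$, and the dimension count
\[
\tdim G'/P' \;=\; |\Phi(P')| \;=\; |\Phi(w)| \;=\; \tdim X_w
\]
shows that the $G'$-orbit through $eP/P$ is open in $X_w$; since $G'/P'$ is projective, the induced $G'$-equivariant map $G'/P' \to X_w$ is an isomorphism. Finally, the coefficient of $\alpha_P$ in each positive root of $G'$ is at most $1$, inherited from its cominusculity in $G$, so $P' \subset G'$ is cominuscule. This yields the claimed bijection between connected subdiagrams of $\sD$ containing $\alpha_P$ and smooth Schubert varieties $X_w \subset G/P$, completing both assertions of the theorem.
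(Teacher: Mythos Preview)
The paper does not give its own proof of Theorem~\ref{T:comin}; it is quoted from the cited references of Lakshmibai--Weyman, Brion--Polo, and Hong.  The only indication the paper gives of the original arguments appears in \S\ref{S:prf3} and the closing remark of \S\ref{S:prf4}: Brion--Polo's proof is representation-theoretic, showing that a smooth minuscule Schubert variety equals the orbit $Q\cdot o$ of its parabolic stabilizer $Q$.  So there is no in-paper proof to compare against.

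That said, your proposal has a genuine gap in the non--simply-laced cominuscule cases.  The tangent-space identity you invoke --- that the $T$--weights of $T_{eP}X_w$ are exactly $\{-\alpha:\alpha\in\Phi(P),\ s_\alpha\le w\}$ --- is a theorem in simply-laced types but fails in types $B$ and $C$, and Kumar's criterion does not collapse to your stated combinatorial identity there.  Concretely, take the odd quadric $G/P=B_2/P_1$, with $\alpha_1$ long and $\alpha_2$ short, so $\Phi(P)=\{\alpha_1,\,\alpha_1{+}\alpha_2,\,\alpha_1{+}2\alpha_2\}$.  For $w=s_2s_1\in W^P$ (length $2$), the Schubert variety $X_w$ is the hyperplane section of the quadric through $eP$, a quadric cone singular at $eP$ with $\tdim\,T_{eP}X_w=3$; yet $s_{\alpha_1+\alpha_2}=s_1s_2s_1$ and $s_{\alpha_1+2\alpha_2}=s_2s_1s_2$ both have length $3$, so $\{\alpha:s_\alpha\le w\}=\{\alpha_1\}$ has only one element.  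Worse, for $w=s_1s_2s_1$ (the longest element of $W^P$, so $X_w=G/P$ is smooth) one has $\Phi(w)=\Phi(P)$, but $s_{\alpha_1+2\alpha_2}=s_2s_1s_2\not\le s_1s_2s_1$, whence $\{\alpha:s_\alpha\le w\}\subsetneq\Phi(w)$ and your criterion wrongly declares the full quadric singular.  The same obstruction arises for the Lagrangian Grassmannians $C_n/P_n$.  To repair the argument in types $B$ and $C$ you need a different smoothness test --- the multiplicity computations of Lakshmibai--Weyman or the stabilizer argument of Brion--Polo --- which is precisely what the cited references supply.
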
  

\noindent Hong--Mok generalized Theorem \ref{T:comin}, using the deformation theory to prove

\begin{theorem}[{\cite{HongMok2013}}] \label{T:HM}
Suppose that $G$ is simple and $P$ is a maximal parabolic with the property that the associated simple root is not short.  Then the smooth Schubert varieties $X \subset G/P$ are homogeneously embedded $G'/P'$, and are indexed by subdiagrams of the Dynkin diagram $\sD$ of $G$.
\end{theorem}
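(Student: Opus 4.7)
The plan is to combine the combinatorics of Schubert varieties with the theory of varieties of minimal rational tangents (VMRT) on $G/P$. Since $G$ is simple and $P$ is maximal, $G/P$ is a Fano manifold of Picard number one and carries a distinguished family $\sK$ of minimal rational curves; at a general point $x$, the tangent directions to curves in $\sK$ through $x$ form the VMRT $\cC_x \subset \bP(T_x(G/P))$, a homogeneous variety for the isotropy action of a Levi subgroup of $P$. The hypothesis that the simple root $\alpha$ associated to $P$ is not short guarantees that $\cC_x$ is the highest weight orbit of this isotropy representation and is linearly non-degenerate, so that (following Hong--Mok) the VMRT structure is projectively rigid: a smooth subvariety $X \subset G/P$ whose general VMRT intersection is itself a homogeneous sub-VMRT is forced to be a translate of a fixed homogeneous model.

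Given a smooth Schubert variety $X = \overline{BwP/P}$, the first task is to extract its tangent model. At the $T$-fixed point $e \in X$, the tangent space $T_e X \subset T_e(G/P) = \fg/\fp$ is a $T$-invariant sum of negative root spaces $V = \bigoplus_{\beta \in \Psi} \fg_{-\beta}$, and smoothness of $X$ translates into a root-combinatorial condition on $\Psi$. The second task is to exhibit, through a general point $x \in X$, minimal rational curves of $G/P$ entirely contained in $X$, so that the sub-VMRT $\cC^X_x := \cC_x \cap \bP(T_x X)$ is non-empty and reduced. Combining the $B$-invariance of $X$, its smoothness, and the non-short hypothesis, one checks that $\cC^X_x$ is itself the highest weight orbit for the semisimple subgroup $G' \subset G$ whose negative root spaces span $V$.

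With the sub-VMRT structure on $X$ identified, I would invoke the Cartan--Fubini-type extension and sub-VMRT rigidity theorems of Hong--Mok to conclude that $X$ is a $G$-translate of the homogeneous model $G'/P'$ with $P' = G' \cap P$. It remains to classify the possible $(G',P')$: the root subsystem generated by $\Psi$ is closed under sums, and (given the non-short hypothesis) is generated by a subset of the simple roots of $G$ containing $\alpha$, so $G'$ corresponds to a subdiagram of $\sD$ containing the node of $\alpha$. Conversely, every such subdiagram yields a smooth Schubert variety by the same construction used in the cominuscule case (Theorem \ref{T:comin}).

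The main obstacle is the intermediate step of showing that, for a smooth $X$, the scheme-theoretic intersection $\cC_x \cap \bP(T_x X)$ is precisely the VMRT of a homogeneous subvariety, rather than some degeneration thereof. This is where the non-short hypothesis becomes indispensable: when the simple root associated to $P$ is short, the VMRT $\cC_x$ has degenerate second fundamental form along additional directions, smooth Schubert varieties can cut $\cC_x$ in non-reduced or reducible loci, and the rigidity/extension step fails --- precisely the phenomenon that prevents a literal generalization to short-root maximal parabolics.
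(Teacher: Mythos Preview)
The paper does not give its own proof of this statement: Theorem~\ref{T:HM} is quoted from Hong--Mok \cite{HongMok2013} and used as a black box in \S\ref{S:prf2} (``By \cite[Proposition 3.7]{HongMok2013}, the Schubert variety $X \subset G/P_\tti$ is the homogeneously embedded, rational homogeneous subvariety corresponding to a subdiagram of $\sD$''). The only hint the paper gives about the Hong--Mok argument is the phrase ``using the deformation theory'' in the introduction and the remark at the end of \S\ref{S:prf4} calling it ``the more geometric approach.'' So there is nothing in the present paper to compare your proposal against line by line.

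That said, your outline is broadly in the right spirit of the Hong--Mok paper: the VMRT $\cC_x$, the r\^ole of the non-short hypothesis in ensuring $\cC_x = \tilde\cC_x$ is a single Levi orbit (cf.~\S\ref{S:Pmax}(b) here), the sub-VMRT $\cC_x \cap \bP(T_xX)$, and a Cartan--Fubini/rigidity step are indeed the ingredients. Where your sketch is thinnest is exactly where you flag it: the assertion that for a \emph{smooth} Schubert variety the intersection $\cC_x \cap \bP(T_xX)$ at a general point is itself a linear section equal to the VMRT of some homogeneous model $G'/P'$. In Hong--Mok this is not obtained by pure root combinatorics at the identity coset; one needs the deformation-theoretic machinery (parallel transport of VMRTs, non-degeneracy of the second fundamental form of $\cC_x$, and the recognition theorem for sub-VMRT structures) to pass from ``smooth and $B$-invariant'' to ``sub-VMRT is a standard model.'' Your sketch names these tools but does not indicate how the smoothness hypothesis is actually consumed to rule out the degenerations you mention; that is the substantive content of \cite[Proposition 3.7]{HongMok2013}, and it is not reproduced here.
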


The main result (Theorem \ref{T:smooth}) of the paper is another generalization of Theorem \ref{T:comin}.  We say that a Schubert variety $X \subset G/P$ is ``horizontal'' if it is tangent to $T^1$ at smooth points (\S\ref{S:HSV}).

\begin{theorem} \label{T:smooth}
Let $X \subset G/P$ be a horizontal Schubert variety.  If $X$ is smooth, then $X$ is a product of homogeneously embedded, rational homogeneous subvarieties $X(\sD') \subset G/P$ corresponding to subdiagrams $\sD' \subset \sD$.  Moreover, each $X(\sD')$ is cominuscule.
\end{theorem}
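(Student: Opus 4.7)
The plan is to translate the statement into combinatorics on the root system and then extract the product structure from a smoothness criterion. Let $I \subset \sD$ be the nodes crossed by $P$; the grading $\fg = \bigoplus_k \fg_k$ with $\fp = \fg_{\geq 0}$ identifies $T_o(G/P) \simeq \fg_-$ and $T^1_o \simeq \fg_{-1}$ at the base point $o = eP$. Any Schubert variety $X = X_w$ (with $w \in W^P$) has tangent space at its smooth open cell spanned by root vectors $\{e_{-\alpha}\}$ for $\alpha$ in a canonical finite set $\Phi(w)$ of positive roots; horizontality (using the characterization in \S\ref{S:HSV}) amounts to $\Phi(w) \subset \Phi(\fg_{-1})$, i.e.\ every $\alpha \in \Phi(w)$ has $I$-height one. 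The goal is thus to classify the smooth horizontal $\Phi(w)$.

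The core of the argument is a smoothness criterion on $\Phi(w)$. By Carrell--Peterson--Kumar, $X_w$ is smooth if and only if $\dim T_v X_w = \dim X_w$ at every $T$-fixed point $v \cdot o$ with $v \leq w$ in Bruhat order, and these tangent-space dimensions can be read off by counting roots $\beta$ with $s_\beta v \leq w$. Combining this with horizontality, I would show that $\Phi(w)$ must be closed under the Weyl group $W_0^{\tss}$ of the semisimple part $\fg_0^{\tss}$ of $\fg_0$: if $\alpha \in \Phi(w)$ and $\gamma$ is a simple root in $\sD \setminus I$ with $s_\gamma(\alpha) \in \Phi(\fg_{-1})$, then $s_\gamma(\alpha) \in \Phi(w)$. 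Roughly, were $s_\gamma(\alpha)$ missing, a suitable test $T$-fixed point built from $s_\alpha$ and reflections in $\gamma$-directions would produce an extra horizontal tangent direction, violating the constancy of $\dim T_v X_w$. Invoking the decomposition $\fg_{-1} = \bigoplus_{i \in I} V_i$ into $\fg_0^{\tss}$-irreducibles (one per crossed node, with lowest weight vector $e_{-\alpha_i}$), the $W_0^{\tss}$-orbits on $\Phi(\fg_{-1})$ are exactly the $\Phi(V_i)$, so $\Phi(w) = \bigsqcup_{i \in I'} \Phi(V_i)$ for some subset $I' \subset I$.

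For each $i \in I'$, the representation $V_i$ is cominuscule for $\fg_0^{\tss}$, and the orbit of $o$ under the semisimple subgroup $G'_i \subset G$ generated by the root subgroups for the sub-diagram $\sD'_i \subset \sD$ (consisting of $i$ together with its connected neighborhood in $\sD \setminus (I \setminus \{i\})$) is the claimed homogeneously embedded cominuscule $X(\sD'_i) \simeq G'_i/P'_i$; the existing cominuscule classification, including Theorem~\ref{T:comin}, identifies this variety concretely. The product structure $X = \prod_{i \in I'} X(\sD'_i)$ then follows by matching $\Phi(w) = \bigsqcup \Phi(V_i)$ against a dimension count: the multiplication map from the product into $G/P$ is a local isomorphism onto $X$, since the $V_i$ span independent directions in $\fg_{-1}$ and the associated root subgroups commute modulo subgroups fixing $o$. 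The main obstacle is the $W_0^{\tss}$-invariance step; the Carrell--Peterson criterion produces many potential smoothness obstructions, and one must show that, within the horizontal regime, these force full $V_i$-orbits of $\Phi(w)$. This will require a carefully chosen test $v$ and a root-system calculation exploiting that $\sD \setminus I$ generates $\fg_0^{\tss}$.
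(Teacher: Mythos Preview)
Your central claim---that smoothness forces $\Phi(w)$ to be $W_0^{\tss}$-invariant, hence a union of full $\Phi(V_i)$---is false.  Take $G/P = \tGr(2,\bC^4)$, so $I=\{2\}$, $\fg_0^{\tss}=\fsl_2\times\fsl_2$ with simple roots $\a_1,\a_3$, and $\Phi(\fg_{-1})=\{\a_2,\,\a_1+\a_2,\,\a_2+\a_3,\,\a_1+\a_2+\a_3\}$ is a single $W_0^{\tss}$-orbit.  The sub-Grassmannian $\tGr(2,\bC^3)\simeq\bP^2$ (corresponding to the subdiagram $\sD'=\{\a_1,\a_2\}$) is a smooth Schubert variety, horizontal since $\tGr(2,\bC^4)$ is cominuscule, with $\Phi(w)=\{\a_2,\,\a_1+\a_2\}$.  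But $s_{\a_3}(\a_2)=\a_2+\a_3\in\Phi(\fg_{-1})\setminus\Phi(w)$, so your invariance step fails already here.  More generally, in any cominuscule $G/P$ with $|I|=1$ the module $\fg_{-1}$ is $\fg_0$-irreducible, and when $W_0^{\tss}$ acts transitively on its weights (e.g.\ any type-$A$ Grassmannian) your argument would force $X_w$ to be either a point or all of $G/P$, contradicting Theorem~\ref{T:comin}.

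The paper's proof shares your first reduction: horizontality gives each $\a\in\Delta(w)$ exactly one nonzero $I$-coefficient, so $\Delta(w)=\bigsqcup_{i\in I}\Delta_i(w)$ and $X=\prod_i X_i$ with each $X_i$ a Schubert variety in a flag variety for a \emph{maximal} parabolic.  But it does \emph{not} claim $\Delta_i(w)$ is all of $\Phi(V_i)$.  Instead, for maximal $P$ it invokes Hong--Mok (Theorem~\ref{T:HM}) when the associated simple root $\a_\tti$ is not short, and handles the five short-root cases $B_r/P_r$, $C_r/P_\tti$ ($\tti<r$), $F_4/P_3$, $F_4/P_4$, $G_2/P_1$ individually, using either Brion--Polo's minuscule classification or the explicit lists of horizontal Schubert varieties from \cite{MR3217458}.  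Your Carrell--Peterson approach would need a much finer analysis---one that singles out subdiagrams rather than full $W_0^{\tss}$-orbits---to replace these external inputs.
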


\noindent The proof of Theorem \ref{T:smooth} is based on the characterization of horizontal Schubert varieties (HSV) in \cite{MR3217458}, and Theorem \ref{T:HM}.  Note that, in general, a smooth Schubert variety $X \subset {G/P}$ need not be homogeneous (Example \ref{eg:SG}).  As will be seen in the course of the proof, the subdiagrams $\sD'$ may be explicitly characterized.

As an application of the main result we characterize the maximal parabolics $P \subset G$ with the property that the Schubert variety swept out by the horizontal lines passing through a point is itself horizontal.  This, and the material developed in \S\ref{S:lines}, is motivated by anticipated applications to a current project of the authors \cite{KR1}.

\section{Horizontal Schubert varieties}

\subsection{Flag manifolds} \label{S:rhv}

Let $G$ be a connected, complex semisimple Lie group, and let $P \subset G$ be a parabolic subgroup.  The homogeneous manifold $G/P$ admits the structure of a rational homogeneous variety as follows.  Fix a choice of \emph{Cartan} and \emph{Borel subgroups} $H \subset B \subset P$.  Let $\fh \subset \fb \subset\fp\subset\fg$ be the associated Lie algebras.  The choice of Cartan determines a set of \emph{roots} $\Delta = \Delta(\fg,\fh) \subset \fh^*$.  Given a root $\a\in \Delta$, let $\fg^\a \subset \fg$ denote the \emph{root space}.  Given a subspace $\fs \subset \fg$, let 
$$
  \Delta(\fs) \ \dfn \ \{ \a\in\Delta \ | \ \fg^\a \subset \fs \} \,.
$$ 
The choice of Borel determines \emph{positive roots} $\Delta^+ = \Delta(\fb) = \{ \a\in\Delta \ | \ \fg^\a \subset \fb \}$.  Let $\sS = \{\a_1,\ldots,\a_r\}$ denote the \emph{simple roots}, and set 
\begin{equation} \label{E:I}
  I \ = \ I(\fp) \ \dfn \ \{ i \ | \ \fg^{-\a_i} \not\subset \fp \}\,.
\end{equation}
Note that 
$$
  I(\fb) \ = \ \{1,\ldots,r\} \,,
$$
and $I = \{\tti\}$ consists of single element if and only if $\fp$ is a \emph{maximal parabolic}.

Let $\{ \w_1,\ldots,\w_r\}$ denote the \emph{fundamental weights} of $\fg$ and let $V$ be the irreducible $\fg$--representation of highest weight 
\begin{equation} \label{E:mu}
  \m \ = \ \m_I \ \dfn \ \sum_{i \in I} \w_i \,.
\end{equation}
Assume that the representation $\fg \to \tEnd(V)$ `integrates' to a representation $G \to \tAut(V)$ of $G$.  (This is always the case if $G$ is simply connected.)  Let $o \in \bP V$ be the \emph{highest weight line} in $V$.  The $G$--orbit $G \cdot o \subset \bP V$ is the \emph{minimal homogeneous embedding} of $G/P$ as a rational homogeneous variety.

\begin{remark}[Non-minimal embeddings] \label{R:nonminemb}
More generally, suppose that $V$ is the irreducible $G$--representation of highest weight $\tilde \m = \sum_{i \in I} a^i\,\w_i$ with $0 < a^i \in \bZ$.  Again, the $G$--orbit of the highest weight line $o \in \bP V$ is a homogeneous embedding of $G/P$.  We write $G/P \inj \bP V$.  The embedding is minimal if and only if $a^i=1$ for all $i \in I$.  For example, the Veronese re-embedding $v_d(\bP^n) \subset \bP\,\tSym^d\bC^{n+1}$ of $\bP^n$ is if minimal if and only if $d = 1$.  (Here $V = \tSym^d\bC^{n+1}$ has highest weight $d\w_1$.)
\end{remark}

The variety $G/P$ is sometimes indicated by circling the nodes of the Dynkin diagram (Appendix \ref{S:dynkin}) corresponding to the index set $I(\fp)$.

\subsection{Schubert varieties} \label{S:schub}

In the case that $G$ is classical (one of $\tSL_n\bC$, $\tSO_n\bC$ or $\tSp_{2n}\bC$), the Schubert varieties of a flag variety $G/P$ may be described geometrically as degeneracy loci, \cf Example \ref{eg:schub1}.  However, these descriptions, aside from not generalizing easily to the exceptional groups, are type dependent.  We will utilize a representation theoretic description of the Schubert varieties that allows a uniform treatment across all flag varieties.  This section does little more than establish notation for our discussion of Schubert varieties.  The reader interested in greater detail is encouraged to consult \cite{MR3217458} and the references therein.

Given a simple root $\a_i\in\sS$, let $(i) \in \tAut(\fh^*)$ denote the corresponding \emph{simple reflection}.  The \emph{Weyl group} $\sW \subset \tAut(\fh^*)$ of $\fg$ is the group generated by the simple reflections $\{ (i) \ | \ \a_i\in\sS \}$.  A composition of simple reflections $(i_1) \circ (i_2) \circ\cdots\circ(i_t)$, which are understood to act on the left, is written $(i_1 i_2 \cdots i_t) \in \sW$.  The \emph{length} of a Weyl group element $w$ is the minimal number 
$$
  |w| \ \dfn \ \tmin\{ \ell \ | \ w = (i_1 i_2 \cdots i_\ell) \}
$$
of simple reflections necessary to represent $w$.

Let $\sW_\fp \subset \sW$ be the subgroup generated by the simple reflections $\{(i) \ | \ i \not \in I\}$.  Then $\sW_\fp$ is naturally identified with the Weyl group of $\fg^0_\mathrm{ss}$.  The rational homogeneous variety $G/P$ decomposes into a finite number of $B$--orbits 
$$
  G/P \ = \ \bigcup_{\sW_\fp w \in \sW_\fp\backslash \sW} B w^{-1} o 
$$
which are indexed by the right cosets $\sW_\fp\backslash\sW$.  The \emph{$B$--Schubert varieties} of $G/P$ are the Zariski closures
$$
  X_w \ \dfn \ \overline{B w^{-1} o }\,.
$$

\begin{remark}\label{R:schubStab}
Observe that the stabilizer $P_w$ of $X_w$ in $G$ contains $B$, and is therefore a parabolic subgroup of $G$.
\end{remark}

\noindent 
More generally, any $G$--translate $g X_w$ is a Schubert variety (of type $\sW_\fp w$).  Define a partial order on $\sW_\fp\backslash \sW$ by defining $\sW_\fp w \le \sW_\fp v$ if $X_w \subset X_v$; then $\sW_\fp\backslash\sW$ is the \emph{Hasse poset}.

Each right coset $\sW_\fp\backslash \sW$ admits a unique representative of minimal length; let 
$$
  \sW^\fp_\tmin \ \simeq \ \sW_\fp \backslash W
$$ 
be the set of minimal length representatives.  Given $w \in \sW^\fp_\tmin$, the Schubert variety $w X_w$ is the Zariski closure of $N_w \cdot o$, where $N_w \subset G$ is a unipotent subgroup with nilpotent Lie algebra 
\begin{equation} \label{E:nw}
  \fn_w \ \dfn \ \bigoplus_{\a\in \Delta(w)} \fg^{-\a} \ \subset \ \fg^{-}
\end{equation}
given by 
\begin{equation} \label{E:Dw}
  \Delta(w) \ \dfn \ \Delta^+ \,\cap \, w(\Delta^-) \,.
\end{equation}
Moreover, $N_w \cdot o$ is an affine cell isomorphic to $\fn_w$, and $\tdim\,X_w = \tdim\,\fn_w = |\Delta(w)|$.  Indeed 
$$
  T_o (w X_w) \ = \ \fn_w \,.
$$
For any $w \in \sW^\fp_\tmin$ we have 
\begin{equation}\label{E:len=dim}
  |w| \ = \ |\Delta(w)| \ = \ \tdim\,X_w \,.
\end{equation}
(The first equality holds for all $w \in \sW$, \cf\cite[Proposition 3.2.14(3)]{MR2532439}.)

\begin{example}[Schubert varieties in $\tOG(2,\bC^m)$] \label{eg:schub1}
Let $\nu$ be a nondegenerate, symmetric bilinear form on $\bC^m$, and let 
\[
  \tOG(2,\bC^m) \ \dfn \ \{ E \in \tGr(2,\bC^m) \ | \ \left.\n\right|_{E} = 0\}
\]
be the orthogonal grassmannian of $\n$--isotropic $2$--planes in $\bC^m$.  Fix a basis $\{ e_1 , \ldots , e_m\}$ of $\bC^m$ with the property that
\[
  \n( e_a , e_b ) \ = \d^{m+1}_{a+b} \,.
\]
Then 
\[
  \sF^p \ \dfn \ \tspan_\bC\{ e_1 , \ldots , e_p \} \ \subset \ \bC^m
\]
defines a flag $\sF^\sb$ with the property that 
\begin{equation} \label{E:nu-iso-F}
  \n(\sF^p , \sF^{m-p}) \ = \ 0 \,.
\end{equation}
Any flag $\sF^\sb$ satisfying \eqref{E:nu-iso-F} is called \emph{$\nu$--isotropic}.

Given $1 \le a < b \le m$ with $a+b \not=m+1$ and $a,b \not=r+1$, define
\begin{subequations} \label{SE:schubOG}
\begin{equation} \label{E:sOG1}
  X_{a,b}(\sF^\sb) \ \dfn \ \{ E \in \tOG(2,\bC^m) \ | \ 
  \tdim\,E\cap\sF^a \ge 1 \,,\ E \subset \sF^b \} \,\,.
\end{equation}
If $m = 2r+1$, then every Schubert variety of $\tOG(2,\bC^{2r+1})$ is $G$--congruent to one of \eqref{E:sOG1}.  If $m = 2r$ is even, define 
\[
  \tilde \sF^r \ = \ \tspan_\bC\{e_1,\ldots,e_{r-1}\,,\, e_{r+1} \} \,.
\]
Note that $\sF^1 \subset \cdots\subset\sF^{r-1} \subset \tilde\sF^r \subset \sF^{r+1}\subset \cdots$ is also a $\nu$--isotropic flag.  Set
\begin{eqnarray}
  \label{E:sOG2}
  \tilde X_{a,r}(\sF^\sb) & \dfn & \{ E \in \tOG(2,\bC^m) \ | \ 
  \tdim\,E\cap\sF^a \ge 1 \,,\ E \subset \tilde\sF^r \} 
  \,, \quad a \le r-1,\\
  \label{E:sOG3}
  \tilde X_{r,b}(\sF^\sb) & \dfn & \{ E \in \tOG(2,\bC^m) \ | \ 
  \tdim\,E\cap\tilde\sF^r \ge 1 \,,\ E \subset \sF^b \}
  \,,\quad b \ge r+2 \,.
\end{eqnarray}
\end{subequations}
Every Schubert variety of $\tOG(2,\bC^{2r})$ is $G$--congruent to one of \eqref{SE:schubOG}.
\end{example}

\subsection{The horizontal subbundle}

This section is a very brief review of the horizontal sub-bundle $T^1 \subset T(G/P)$, which may be characterized as the unique $G$--homogeneous, bracket--generating subbundle of the holomorphic tangent bundle. The reader interested in greater detail is encouraged to consult \cite{MR3217458} and the references therein.

We will need an representation theoretic description of the horizontal sub-bundle, and for this it will be convenient to recall the notion of a ``grading element.''

\subsubsection{Grading elements} \label{S:grelem}

Let $\{ \ttS^1 , \ldots , \ttS^r\}$ be the basis of $\fh$ dual to the simple roots.  A \emph{grading element} is any member of $\tspan_\bZ\{ \ttS^1,\ldots,\ttS^r\}$; these are precisely the elements $\ttT\in \fh$ of the Cartan subalgebra with the property that $\a(\ttT) \in \bZ$ for all roots $\a\in \Delta$.  Since $\ttT$ is semisimple (as an element of $\fh$), the Lie algebra $\fg$ admits a direct sum decomposition
\begin{subequations} \label{SE:grading}
\begin{equation}
  \fg \ = \ \bigoplus_{\ell\in\bZ} \fg^\ell
\end{equation}
into $\ttT$--eigenspaces
\begin{equation} \label{E:grT}
  \fg^\ell \ \dfn \ \{ \xi \in \fg \ | \ [\ttT,\xi] = \ell \xi \} \,.
\end{equation}
In terms of root spaces, we have
\begin{equation} \label{E:gr1} \renewcommand{\arraystretch}{1.3}
\begin{array}{rcl}
  \displaystyle \fg^\ell & = & 
  \displaystyle \bigoplus_{\a(\ttT)=\ell} \fg^\a \,,\quad \ell \not=0 \,,\\
  \displaystyle \fg^0 & = & 
  \displaystyle \fh \ \op \ \bigoplus_{\a(\ttT)=0} \fg^\a \,.
\end{array}
\end{equation}
\end{subequations}
The $\ttT$--eigenspace decomposition is a \emph{graded Lie algebra decomposition} in the sense that 
\begin{equation}\label{E:gr}
  \left[ \fg^\ell  , \fg^m \right] \ \subset \ \fg^{\ell+m} \,,
\end{equation}
a straightforward consequence of the Jacobi identity.  It follows that $\fg^0$ is a Lie subalgebra of $\fg$ (in fact, reductive), and each $\fg^\ell$ is a $\fg^0$--module.  The Lie algebra
\begin{equation} \label{E:p}
  \fp \,=\, \fp_\ttT \ = \ \fg^0 \,\op \,\fg^+ 
\end{equation}
is the \emph{parabolic subalgebra determined by the grading element $\ttT$}.  See \cite[Section 2.2]{MR3217458} for details.

\subsubsection{Minimal grading elements} \label{S:TvE}

Two distinct grading elements may determine the same parabolic $\fp$.  As a trivial example of this, given a grading element $\ttT$ and $0 < d \in \bZ$, both $\ttT$ and $d \ttT$ determine the same parabolic.  Among those grading elements $\ttT$ determining the same parabolic \eqref{E:p}, only one will have the property that $\fg^{\pm1}$ generates $\fg^\pm$ as an algebra.  That canonical grading element is defined as follows.  Given a parabolic $\fp$ subalgebra (and choice of Cartan and Borel $\fh \subset \fb \subset \fp$), the \emph{grading element associated to $\fp$} is
\begin{equation} \label{E:ttE}
  \ttE \ \dfn \ \sum_{i \in I(\fp)} \ttS^i \,.
\end{equation}
The reductive subalgebra $\fg^0 = \fg^0_\mathrm{ss} \op \fz$ has center $\fz = \tspan_\bC\{ \ttS^i \ | \ i \in I \}$ and semisimple subalgebra $\fg^0_\mathrm{ss} = [\fg^0,\fg^0]$.  A set of simple roots for $\fg^0_\mathrm{ss}$ is given by $\sS(\fg_0) = \{ \a_j \ | \ j \not \in I\}$.

\subsubsection{Definition} 

The (holomorphic) tangent bundle of $G/P$ is the $G$--homogeneous vector bundle 
\[
  T (G/P)\ = \ G \times_P (\fg/\fp) \,.
\]
Let $\ttE$ be the minimal grading element associated with $\fp$ (\S\ref{S:TvE}), and consider the associated grading \eqref{SE:grading}.  By \eqref{E:gr} and \eqref{E:p}, the quotient $\fg^{\ge-1}/\fp$ is a $\fp$--module.  Therefore,
\begin{equation} \label{E:ipr}
  T^1 \ \dfn \ G \times_P (\fg^{\ge-1}/\fp)
\end{equation}
defines a homogeneous, holomorphic subbundle of $T(G/P)$.  This subbundle is the \emph{horizontal subbundle}. We have $T^1 = T(G/P)$ if and only if $G/P$ is cominuscule. 

A \emph{horizontal submanifold} is an integral manifold of $T^1$; that is, a horizontal submanifold is any connected complex submanifold $M \subset G/P$ with the property that $T_zM \subset T^1_z$ for all $z \in M$, or any irreducible subvariety $Y \subset {G/P}$ with the property that $T_yY \subset T^1_y$ for all smooth points $y \in Y$.

\subsection{Horizontal Schubert varieties} \label{S:HSV}

The condition that the Schubert variety $X_w$ be horizontal is equivalent to $\Delta(w) \subset \Delta(\fg_1)$, where $\Delta(w)$ is given by \eqref{E:Dw}, cf.~\cite[Theorem 3.8]{MR3217458}.  A convenient way to test for this condition is as follows.  Let
$$
  \varrho \ \dfn \ \sum_{i=1}^r \w_i \ = \ \half \sum_{\a\in\Delta^+} \a
$$
be the sum of the fundamental weights (which is also half the sum of the positive roots).  Define
\begin{equation} \label{E:dfn_rho}
  \varrho_w \ \dfn \ \varrho \,-\, w(\varrho) \ = \ \sum_{\a\in\Delta(w)} \a \,.
\end{equation}
(See \cite[(5.10.1)]{MR0114875} for the second equality.)  Then 
$$
  |w| \ \le \ \varrho_w(\ttE) \ \in \ \bZ \,,
$$
and equality holds if and only if $\Delta(w) \subset \Delta(\fg_1)$; equivalently, $X_w$ is horizontal if and only if $\varrho_w(\ttE) = |w|$.  See \cite[Section 3]{MR3217458} for details.  Let
$$
  \sW_\mathrm{h} \ \dfn \ \{ w \in \sW^\fp_\tmin \ | \ \varrho_w(\ttE) = |w| \} 
$$
be the set indexing the Schubert variations of Hodge structure. 

Lemma \ref{L:sOG} and Corollary \ref{C:sOG} are continuations of Example \ref{eg:schub1}.

\begin{lemma}[Horizontal Schubert varieties in $\tOG(2,\bC^m)$] \label{L:sOG}
Recall the Schubert varieties \eqref{SE:schubOG} of $\tOG(2,\bC^m)$.
\begin{a_list_emph}
\item The Schubert variety $X_{a,b}(\sF^\sb)$ is horizontal if and only if $\sF^a \subset (\sF^b)^\perp$.
\item The Schubert varieties $\tilde X_{a,r}(\sF^\sb)$, are all horizontal.
\item None of the Schubert varieties $\tilde X_{r,b}(\sF^\sb)$ are horizontal. 
\end{a_list_emph}
\end{lemma}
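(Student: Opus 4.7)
The plan rests on the geometric identification
\[
T^1_E \;=\; \tHom(E,\,E^\perp/E) \;\subset\; T_E\tOG(2,\bC^m),
\]
which one reads off \eqref{E:ipr} in coordinates: the positive roots with $\a_2$-coefficient $1$ (comprising $\Delta(\fg_1)$) parametrize maps $\phi: E \to E^\perp/E$, while the unique $\a_2$-coefficient $2$ root $\e_1 + \e_2$ accounts for the one-dimensional non-horizontal complement of $T^1_E$ in $T_E\tOG(2,\bC^m)$. Consequently, a Schubert variety $X$ is horizontal at a smooth point $E_0$ if and only if every tangent direction $\tilde\phi\in T_{E_0}X$ satisfies $\tilde\phi(E_0)\subset E_0^\perp$ modulo $E_0$. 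For each of (a)--(c) I would choose a convenient smooth point $E_0$ from the distinguished basis, parametrize tangent vectors by first-order curves $v_i(t)$ in $\bC^m$ subject to the incidence conditions of \eqref{SE:schubOG} and to the linearized isotropy $\nu(\tilde\phi(v),w)+\nu(v,\tilde\phi(w))=0$, and then compare the first-order data with $E_0^\perp$.

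For (a), take $E_0=\tspan\{e_a,e_b\}$, isotropic by the hypothesis $a+b\neq m+1$. Writing $\tilde\phi(e_a)=\sum_{i<a}c_i e_i$ and $\tilde\phi(e_b)=\sum_{i<b,\,i\neq a}d_i e_i$, the mixed isotropy $\nu(\tilde\phi(e_a),e_b)+\nu(e_a,\tilde\phi(e_b))=0$ reduces to
\[
c_{m+1-b}+d_{m+1-a}\;=\;0,
\]
while $\tilde\phi(E_0)\subset E_0^\perp=\tspan\{e_i:i\neq m+1-a,\ m+1-b\}$ requires $c_{m+1-b}=d_{m+1-a}=0$. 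When $a+b\le m$, the indices $m+1-b\ge a+1$ and $m+1-a\ge b+1$ fall \emph{outside} the free-parameter ranges $i<a$ and $i<b$, so the horizontality conditions hold vacuously and $X_{a,b}$ is horizontal. When $a+b\ge m+2$, both indices are in range and isotropy only pairs them, leaving a one-parameter family of non-horizontal directions. The self-isotropy relations $\nu(v_i,v_i)=0$ are either automatic (using $a,b\neq r+1$) or force the analogous vanishings of $c_{m+1-a}$ and $d_{m+1-b}$.

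Case (b) is immediate: $\tilde\sF^r$ is a Lagrangian (isotropic of dimension $r$) subspace of $\bC^{2r}$, so $\tilde\sF^r=(\tilde\sF^r)^\perp\subset E^\perp$ for every $E\subset\tilde\sF^r$. Since in addition $\sF^a\subset\tilde\sF^r$ (as $a\le r-1$), every tangent direction of $\tilde X_{a,r}$ at a smooth point $E_0$ satisfies $\tilde\phi(E_0)\subset\tilde\sF^r\subset E_0^\perp$, proving horizontality. Case (c) parallels (a): at $E_0=\tspan\{e_{r+1},e_b\}$ (isotropic since $b\ge r+2$), the analogous mixed-isotropy relation takes the form
\[
c_{2r+1-b}+d_r\;=\;0,
\]
pairing the in-range coefficient $c_{2r+1-b}$ of $\tilde\phi(e_{r+1})\in\tilde\sF^r$ (with $2r+1-b\le r-1$) to the coefficient $d_r$ of $\tilde\phi(e_b)\in\sF^b$; this yields a tangent direction outside $E_0^\perp$, so $\tilde X_{r,b}$ is non-horizontal. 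The main technical burden throughout is the index accounting in (a) and (c); no deeper conceptual obstacle is anticipated.
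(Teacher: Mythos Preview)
Your approach is correct and essentially the same as the paper's: both identify horizontality with the condition $\td E \subset E^\perp$ (equivalently $T^1_E = \tHom(E,E^\perp/E)$), dispose of (b) via $\tilde\sF^r = (\tilde\sF^r)^\perp$, and for the non-horizontal cases in (a) and (c) exhibit the same tangent direction at the same base point $E_0=\tspan\{e_a,e_b\}$ (resp.\ $\tspan\{e_{r+1},e_b\}$)---the paper packages it as the explicit curve $E(t)=\tspan\{e_b+te_{m+1-a},\,e_a-te_{m+1-b}\}$ (resp.\ $\tspan\{e_b+te_r,\,e_{r+1}-te_{m+1-b}\}$), whose derivative is precisely your pair $(c_{m+1-b},d_{m+1-a})=(-1,1)$. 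The only thing left implicit in your sketch is that checking at a single cell point suffices for the ``horizontal'' direction in (a); this follows from the $B$--homogeneity of the cell and the $G$--homogeneity of $T^1$, and the paper likewise leaves it as ``straightforward.''
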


\begin{corollary}[Maximal horizontal Schubert varieties in $\tOG(2,\bC^m)$] \label{C:sOG}
The maximal (with respect to containment) HSV in $\tSO(2,\bC^m)$, where $m \in \{2r,2r+1\}$, are: the $X_{a,m-a}(\sF^\sb)$, with $1 \le a \le r-1$; and
$\tilde X_{r-1,r}(\sF^\sb)$, if $m = 2r$.  Each of these maximal HSV is of dimension $m-4$.
\end{corollary}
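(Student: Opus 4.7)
The plan is to extract Corollary \ref{C:sOG} directly from the enumeration of horizontal Schubert varieties in Lemma \ref{L:sOG}, by analyzing the containment poset and performing a short dimension count.

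First I would rewrite Lemma \ref{L:sOG}(a) in the explicit form $a + b \le m$, using that the flag is $\n$-isotropic and hence $(\sF^b)^\perp = \sF^{m-b}$. Combined with parts (b) and (c), this gives the full list of HSVs in $\tOG(2,\bC^m)$: the $X_{a,b}(\sF^\sb)$ with $a + b \le m$ (under the usual constraints $a < b$ and $a,b \ne r+1$), together with, for $m = 2r$, the $\tilde X_{a,r}(\sF^\sb)$ for $a \le r-1$.

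Next I would read off the containment relations from the defining inclusions $\sF^a \subset \sF^{a+1}$ and $\tilde\sF^r \subset \sF^{r+1}$: the rule $X_{a,b} \subset X_{a',b'}$ iff $a \le a'$ and $b \le b'$, and the analogous rule for the tilde family, are both immediate. Inside the horizontal locus $a+b \le m$, maximality of a type (a) variety forces $a+b = m$, and imposing $a < b$ together with $b \ne r+1$ yields $X_{a,m-a}$ for $1 \le a \le r-1$. Among the $\tilde X_{a,r}$ only $\tilde X_{r-1,r}$ is maximal, and it is not contained in any horizontal $X_{a',b'}$ because such a containment would force $a' \ge r-1$ and $b' \ge r+2$ (since $b' = r+1$ is excluded), violating $a'+b' \le 2r$. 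The one bookkeeping subtlety—what I would flag as the main point in the argument—is the edge case $a = r-1$ in even dimension $m = 2r$, where the formal variety $X_{r-1,r+1}$ has the disallowed index $b = r+1$ and in fact splits as $X_{r-1,r} \cup \tilde X_{r-1,r}$ into two irreducible horizontal Schubert components; interpreting the $a = r-1$ entry of the family $X_{a,m-a}$ as $X_{r-1,r}$ and listing $\tilde X_{r-1,r}$ separately, as the corollary does, correctly captures both maximal components.

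Finally, the common dimension $m-4$ follows from a one-line parameter count. For $X_{a,m-a}$, choose an isotropic line $L \subset \sF^a$ ($a-1$ parameters), then pick the second generator of $E$ as a point of $\bP(\sF^{m-a}/L)$; horizontality places $\sF^{m-a} \subset L^\perp$, so the pairing $\n(L,E)=0$ is automatic and the isotropy of $E$ reduces to the single quadric $\n(v,v)=0$, cutting out a hypersurface of dimension $m-a-3$. The total is $(a-1)+(m-a-3) = m-4$. The identical count for $\tilde X_{r-1,r}$ uses that $\tilde\sF^r$ is maximal isotropic so the quadric condition is vacuous, giving $(r-2)+(r-2) = 2r-4 = m-4$.
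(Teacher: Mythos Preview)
Your proposal is correct, and indeed it is the natural explicit derivation: the paper gives no separate proof of Corollary \ref{C:sOG}, treating it as an immediate consequence of Lemma \ref{L:sOG} and the description of the Schubert varieties in Example \ref{eg:schub1}. Your handling of the edge case $a=r-1$ for $m=2r$ (where the formal entry $X_{r-1,r+1}$ has the forbidden index $b=r+1$ and decomposes as $X_{r-1,r}\cup\tilde X_{r-1,r}$) is a genuine clarification of a point the paper leaves implicit, and your dimension counts are clean and correct.
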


\begin{proof}[Proof of Lemma \ref{L:sOG}]
By definition, a Schubert variety $X_w \subset \tOG(2,\bC^m)$ is horizontal (equivalently, satisfies Griffiths's transversality condition) if and only if 
\begin{equation} \label{E:MR3217458}
  \left. \td E \right|_{N_w \cdot o} \ \subset \ E^\perp \ \dfn \ 
  \{ v \in \bC^m \ | \ \n(E,v) = 0 \}
\end{equation}
on the Schubert cell $N_w \cdot o$.  \smallskip

(a) It is straight--forward to see that the condition $\sF^a \subset (\sF^b)^\perp$ implies $X_{a,b}(\sF^\sb)$ is horizontal.  Suppose that $\sF^a \not\subset (\sF^b)^\perp$; equivalently, $a+b > m$.  Then the condition $a+b \not=m+1$ (see Example \ref{eg:schub1}) forces $a+b \ge m+2$.  It follows that 
\[
  E(t) \ \dfn \ \tspan_\bC\{ e_b + t e_{m+1-a} \,,\,
  e_a - t e_{m+1-b} \} \ \in \ X_{a,b}(\sF^\sb) \,.
\]
(In fact, $E(t)$ lies in the Schubert cell.)  As \eqref{E:MR3217458} clearly fails for $E(t)$, we see that $X_{a,b}(\sF^\sb)$ is not horizontal.  This establishes assertion (a) of the lemma. \smallskip

(b)  As noted in Example \ref{eg:schub1}, the Schubert varieties $\tilde X_{a,r}(\sF^\sb)$ are defined for $a < r$.  Since $\sF^a \subset \tilde\sF^r = (\tilde\sF^r)^\perp$, it is immediate that the $\tilde X_{a,r}(\sF^\sb)$ are horizontal.  \smallskip

(c) Recall that the Schubert varieties $\tilde X_{r,b}(\sF^\sb)$ are defined for $r+2 \le b$, \cf Example \ref{eg:schub1}.  It follows that 
\[
  E(t) \ \dfn \ \tspan_\bC\{ e_b + t e_r \,,\,
  e_{r+1} - t e_{m+1-b} \} \ \in \ \tilde X_{r,b}(\sF^\sb) \,.
\]
(As above, $E(t)$ lies in the Schubert cell.)  As \eqref{E:MR3217458} clearly fails for $E(t)$, we see that $\tilde X_{r,b}(\sF^\sb)$ is not horizontal.  This establishes assertion (c) of the lemma.
\end{proof}

\section{Smooth horizontal Schubert varieties} \label{S:smooth}

A \emph{homogeneously embedded, homogeneous submanifold} of $G/P$ is a submanifold of the form 
$$
   g\,\{ aP \in G/P \ | \ a \in G' \} \ = \ g\, G'/P \,,
$$
where $g \in G$ and $G'$ is a closed Lie subgroup of $G$.  Such a submanifold is isomorphic to $G'/P'$, where $P' = G' \cap P$.  

Distinguished amongst the homogeneously embedded, homogeneous submanifolds of $G/P$ are the rational homogeneous subvarieties $X(\sD')$ corresponding to subdiagrams $\sD'$ of the Dynkin diagram $\sD$ of $\fg$.  The correspondence is as follows:  Identifying the nodes of $\sD$ with the simple roots $\sS$ of $\fg$, to any subdiagram we may associate a semisimple Lie subalgebra $\fg' \subset \fg$ by defining $\fg'$ to be the subalgebra generated by the root spaces $\{\fg^{\pm\a} \ | \ \a \in \sD' \}$.  Note that the Dynkin diagram of $\fg'$ is naturally identified with the subdiagram $\sD'$.  Let $G' \subset G$ be the corresponding closed, connected semisimple Lie subgroup.  For such $G'$, the subgroup $P' = G' \cap P$ is a parabolic subgroup.  The $G'$--orbit of $P \in G/P$ is isomorphic to $G'/P'$ and is the \emph{homogeneously embedded, rational homogeneous subvariety $X(\sD') \subset G/P$ corresponding to subdiagram $\sD' \subset \sD$}.

Define the index set $J = \{ j \ | \ \fg^{\a_j} \not\subset \fg' \}$.  Let $\ttF = \sum_{j \in J}\,\ttS^j$ be the corresponding grading element (Section \ref{S:grelem}), and let $\fg = \op \fg^\ell$ be the $\ttF$--eigenspace decomposition \eqref{SE:grading}.  Then $\fg' = [\fg^0 , \fg^0]$ is the semisimple component of the reductive subalgebra $\fg^0$.  Let $Q = P_J$ be the corresponding parabolic subgroup of $G$ (with Lie algebra $\fq = \fg^0 \op \fg^+$.)  It follows from the discussion of \cite[\S2.7.1]{MR2090671}\footnote{There is a typo in \cite[\S2.7.1]{MR2090671}; see \S\ref{S:tits_pt} for the corrected statement.} that
\begin{equation} \label{E:TvD}
  \hbox{$X(\sD') \subset G/P$ is the Tits transformation $\cT(Q/Q)$ of the point in 
  $Q/Q \in G/Q$.}
\end{equation}  
This, with \cite[Lemma 2.4]{MR3130568} (alternatively, see Lemma \ref{L:tits}), yields the following 

\begin{lemma} \label{L:subD}
The homogeneously embedded, rational homogeneous subvarieties $X(\sD') \subset G/P$ corresponding to subdiagrams $\sD' \subset \sD$ are smooth Schubert varieties of $G/P$.
\end{lemma}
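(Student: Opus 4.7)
The plan is to combine the two ingredients the author has just flagged: the Tits--transformation description \eqref{E:TvD}, and the fact (quoted from \cite[Lemma 2.4]{MR3130568} / Lemma \ref{L:tits}) that Tits transformations carry Schubert varieties to Schubert varieties. Smoothness and the Schubert property will be verified separately, with essentially no new computation required.

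For smoothness, the argument is immediate from the definition of $X(\sD')$. By construction $X(\sD') = G' \cdot o \subset G/P$, where $o = [P] \in G/P$. This is the image of the orbit map $G' \to G/P$, and $X(\sD')$ is thus a homogeneously embedded copy of $G'/P'$ with $P' = G' \cap P$. Since $G'$ is a closed connected semisimple subgroup and $P'$ is a closed subgroup, $G'/P'$ is a smooth manifold, so $X(\sD')$ is smooth in $G/P$.

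For the Schubert property, I would invoke \eqref{E:TvD}: taking $Q = P_J$ with $J = \{j \mid \fg^{\a_j} \not\subset \fg'\}$, the variety $X(\sD')$ is the Tits transformation $\cT(Q/Q)$ of the base point of $G/Q$. That base point $\{Q/Q\} \subset G/Q$ is trivially a Schubert variety in $G/Q$ (it is the closure of the $B$--orbit $B \cdot (Q/Q)$, which is a single point since $B \subset Q$). Then \cite[Lemma 2.4]{MR3130568} (or the in--text Lemma \ref{L:tits}) asserts that the Tits transformation of a Schubert variety is itself a Schubert variety, so $X(\sD') = \cT(Q/Q)$ is a Schubert variety in $G/P$.

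There is no serious obstacle here: once \eqref{E:TvD} is accepted, the lemma is a two--line concatenation of homogeneity (for smoothness) and the Tits--transformation principle (for the Schubert structure). The one point worth checking carefully is that the relevant version of Lemma \ref{L:tits} applies in the generality needed, i.e., that Tits transformations of points (not just of positive--dimensional Schubert varieties) stay inside the class of Schubert varieties; this is built into the standard statement because $B \subset Q$ forces $\cT(Q/Q) = Q \cdot o = \overline{B w_0^Q \cdot o}$, where $w_0^Q$ is the longest element of the Weyl group of the Levi of $Q$, exhibiting $X(\sD')$ explicitly as the Schubert variety indexed by the coset $\sW_\fp w_0^Q$.
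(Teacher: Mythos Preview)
Your proposal is correct and follows essentially the same approach as the paper: the paper's proof is the single sentence preceding the lemma, which derives it from \eqref{E:TvD} together with \cite[Lemma 2.4]{MR3130568} (equivalently Lemma \ref{L:tits}), exactly as you do. Your added paragraph making the Schubert index explicit ($\cT(Q/Q) = Q\cdot o = \overline{B\,w_0^Q\cdot o}$, indexed by $\sW_\fp w_0^Q$) is a helpful unpacking of how Lemma \ref{L:tits} applies to the point $Q/Q$, but it is elaboration rather than a different route.
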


\begin{remark} \label{R:Dw}
Let $\ttE$ be the grading element \eqref{E:ttE} associated with $\fp$.  It follows from the discussion above, that a Schubert variety $X_w$ is of the form $X(\sD')$ if and only if $\Delta(w) = \Delta(\fg'_+)$, where $\fg' = \op \fg'_\ell$ is the $\ttE$--eigenspace decomposition of $\fg'$ and $\Delta(w)$ is the set \eqref{E:Dw}.  By Section \ref{S:HSV}, $X(\sD')$ is a horizontal if and only if $\fg' = \fg'_1 \op \fg'_0 \op \fg'_{-1}$; equivalently, $X(\sD')$ is Hermitian symmetric.  
\end{remark}

The main result of the paper is 

\begin{mt}
Let $G/P$ be a rational homogeneous variety, and let $X \subset G/P$ be a horizontal Schubert variety.  If $X$ is smooth, then $X$ is a product of homogeneously embedded, rational homogeneous subvarieties $X(\sD') \subset G/P$ corresponding to subdiagrams $\sD' \subset \sD$.  Moreover, each $X(\sD')$ is cominuscule.
\end{mt}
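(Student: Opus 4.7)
My plan combines the representation-theoretic characterization of horizontal Schubert varieties from \cite{MR3217458} (reviewed in Section \ref{S:HSV}) with Theorem \ref{T:HM}. Let $X = X_w$, for $w \in \sW^\fp_\tmin$, be a smooth HSV. Then $\Delta(w) \subset \Delta(\fg_1)$, so $\fn_w = \bigoplus_{\a \in \Delta(w)} \fg^{-\a}$ lies in $\fg_{-1}$ and is abelian (since $[\fn_w,\fn_w] \subset \fn_w \cap \fg_{-2} = 0$). In view of Remark \ref{R:Dw}, the theorem amounts to showing that $\Delta(w)$ is the set of positive roots of a semisimple subalgebra $\fg' \subset \fg$ corresponding to a subdiagram $\sD' \subset \sD$; the cominuscule conclusion will then be automatic.

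My first step would be to extract a candidate subdiagram from the root data. Let $\Sigma_w \subset \sS$ be the set of simple roots appearing with positive coefficient in some $\a \in \Delta(w)$, let $\sD_w \subset \sD$ be the full subdiagram on $\Sigma_w$, and decompose $\sD_w = \sqcup_k \sD_k$ into its connected components; let $\fg_k \subset \fg$ denote the simple subalgebra corresponding to $\sD_k$. A node of $\sD_k$ adjacent in $\sD$ to a node of $\sD_{k'}$ would already be adjacent in $\sD_w$, so roots with support in distinct components are strongly orthogonal; hence $\fn_w = \bigoplus_k \fn_{w,k}$ splits into pairwise commuting summands, and $X_w = \prod_k X_{w,k}$, with each factor a smooth HSV in the simple rational homogeneous variety $G_k/P_k$ (where $G_k$ is the simple subgroup corresponding to $\sD_k$ and $P_k = G_k \cap P$). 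This reduces the theorem to the case where $G$ is simple and $\sD_w$ is connected.

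In the reduced setting my next step is to verify that the governing parabolic in $G_k$ is maximal, and then to invoke Theorem \ref{T:HM} to conclude $X_w = X(\sD')$ for some subdiagram $\sD' \subseteq \sD_w$. The main obstacle here is the short-root restriction in Theorem \ref{T:HM}: in types $B$, $C$, $F_4$, $G_2$, certain short-root maximal parabolics admit smooth Schubert varieties that are not of the form $X(\sD')$. My expectation is that these exotic examples all contain root strings that reach into $\fg_{\ge 2}$, and are therefore automatically excluded by the horizontal hypothesis $\Delta(w) \subset \Delta(\fg_1)$. Establishing this assertion is the technical heart of the proof and requires a case-by-case root-system inspection of the short-root maximal parabolics using the calculus of \cite{MR3217458}.

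Finally, the cominuscule conclusion is immediate from Remark \ref{R:Dw}: once $X_w = X(\sD')$ is established, the horizontality of $X_w$ is equivalent to the three-step grading $\fg' = \fg'_{-1} \oplus \fg'_0 \oplus \fg'_1$, which in turn characterizes the Hermitian symmetric, equivalently cominuscule, case. Reassembling the product obtained in the reduction step yields $X = \prod_k X(\sD'_k)$ with each factor cominuscule, as claimed.
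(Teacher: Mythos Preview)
Your overall architecture matches the paper's proof exactly: reduce to a maximal parabolic, invoke Hong--Mok (Theorem~\ref{T:HM}) when the simple root is not short, handle the finitely many short-root maximal parabolics by direct inspection of the horizontal Schubert varieties (using \cite{MR3217458} and Brion--Polo), and read off cominusculeness from Remark~\ref{R:Dw}.

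The one substantive difference is in the reduction step, and here your version carries a gap. You decompose $\Delta(w)$ according to the connected components $\sD_k$ of the \emph{support} subdiagram $\sD_w$, and then assert that the parabolic $P_k = G_k\cap P$ is maximal, i.e.\ that each $\sD_k$ meets $I$ in a single node. This is plausible but not immediate: two nodes $i,i'\in I$ could a priori lie in the same component $\sD_k$ if roots of $\Delta_i(w)$ and $\Delta_{i'}(w)$ have overlapping or adjacent supports, and ruling this out requires an argument from biconvexity of $\Delta(w)$ (for instance, using the ``downward saturation'' property that $\gamma\in\Delta(w)$ and $\gamma-\alpha\in\Delta^+$ with $\alpha\in\Delta^+(\fg^0)$ forces $\gamma-\alpha\in\Delta(w)$, then producing two roots whose sum lies in $\Delta(\fg_2)$). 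The paper bypasses this entirely by decomposing $\Delta(w)$ not by support but by the index $i\in I$ for which $\alpha(\ttS^i)=1$; this partition $\Delta(w)=\bigsqcup_{i\in I}\Delta_i(w)$ yields $X=\prod_i X_i$ with each $X_i$ a horizontal Schubert variety in $G_i/(G_i\cap P)$, where $G_i$ corresponds to the connected component of $\sD\setminus(I\setminus\{i\})$ containing $i$ --- and maximality of $G_i\cap P$ is then automatic by construction. So the paper's reduction is cleaner; yours would work too, but only after supplying the missing maximality argument.
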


\noindent   Since a homogeneously embedded Hermitian symmetric space is necessarily smooth, Theorem \ref{T:smooth} is equivalent to:  \emph{A HSV is smooth if and only if it is a horizontal homogeneously embedded Hermitian symmetric space.}  

Theorem \ref{T:smooth} is proved in Sections \ref{S:smprf}--\ref{S:prf4}.
\medskip

\begin{example}[Symplectic Grassmannians] \label{eg:SG}
In this example we will (a) illustrate the construction of $X(\sD')$ from $\sD'\subset \sD$, and (b) observe that not every smooth Schubert variety of a rational homogeneous variety is homogeneous; in particular, the assumption that $X$ be horizontal in Theorem \ref{T:smooth} is essential.

Let $\n$ be a non-degenerate, skew-symmetric bilinear form on $\bC^{2r}$.  Fix $\tti \le r$.  The symplectic grassmannian
\[
   \mathrm{SG}(\tti,\bC^{2r}) \ = \ 
   \left\{ E \in \tGr(\tti,\bC^{2r}) \ \left| \ \left.\n\right|_E = 0 \right.\right\}
\]
is a rational homogeneous variety $G/P$, where $G = \tAut(\bC^{2r},Q) = \tSp_{2r}\bC$ and $P = P_\tti$ is the maximal parabolic subgroup associated to the $\tti$--th simple root. The Dynkin diagram of $G$ (containing $r$ nodes) is 
{
 \setlength{\unitlength}{3pt}
 \begin{picture}(25,3)(0,-1)
 \multiput(0,0)(5,0){2}{\circle*{1}}
 \put(0,0){\line(1,0){5}}
 \multiput(15,0)(5,0){3}{\circle*{1}}
 \multiput(8,0)(2,0){3}{\circle*{0.4}}
 \put(15,0){\line(1,0){5}}
 \put(20,0.3){\line(1,0){5}}
 \put(20,-0.3){\line(1,0){5}}
 \put(22,-0.75){{\footnotesize{$\langle$}}}
 \end{picture}
}.
 
Fix a $Q$--isotropic flag $F^1 \subset F^2 \subset \cdots \subset F^{2r} = \bC^{2r}$; here $F^d$ is a complex linear subspace of dimension $d$ and $\n(F^d , F^{2r-d}) = 0$.  Given $0 \le a < \tti < b \le 2r-a$, 
$$
  X_{a,b} \ = \ \{ E \in \mathrm{SG}(\tti,\bC^{2r}) \ | \ F^a \subset E \subset F^b \}
$$
is a Schubert variety.  The variety $X_{a,b}$ is the homogeneous submanifold $X(\sD')$ associated to a Dynkin subdiagram $\sD' \subset \sD$ if and only if either $b \le r$, or $b = 2r-a$.  In the first case ($b \le r$), the subdiagram $\sD'$ corresponds to the simple roots $\sS' = \{ \a_{a+1},\ldots,\a_{b-1}\}$; we have $\fg' \simeq \fsl_{b-a}\bC$ and $X_{a,b} \simeq \tGr(\tti-a,\bC^{b-a})$; these Schubert varieties are horizontal.  In the second case ($b = 2r-a$), the subdiagram $\sD'$ corresponds to the simple roots $\sS' = \{ \a_{a-1},\ldots,\a_r\}$; we have $\fg'\simeq \fsp_{2(r-a)}\bC$ and $X_{a,b} = \mathrm{SG}(\tti-a,\bC^{2(r-a)})$; these $X_{a,b}$ are horizontal if and only if $\tti=r$ (equivalently, the symplectic grassmannian is a Hermitian symmetric Lagrangian grassmannian).

We illustrate this for $G/P=\mathrm{SG}(4,\bC^{16})$; that is, $\tti=4$ and $r=8$.  The encircled subdiagram 
\begin{center}
 \setlength{\unitlength}{3pt}
 \begin{picture}(35,3)(0,-1)
 \multiput(0,0)(5,0){8}{\circle*{1}}
 \put(15,0){\circle{1.9}}
 \put(0,0){\line(1,0){30}}
 \put(30,0.3){\line(1,0){5}}
 \put(30,-0.3){\line(1,0){5}}
 \put(32,-0.75){{\footnotesize{$\langle$}}}
 \put(15,0){\oval(25,4)}
 \end{picture}
\end{center}
corresponds to $\fg' = \fsl_6\bC$ and 
$$
  X_{1,7} \ = \ \{ E \in \mathrm{SG}(4,\bC^{16}) \ | \ F^1 \subset E \subset F^7 \} 
  \ = \ \tGr(3,F^7/F^1) \ \simeq \ \tGr(3,\bC^6) \,.
$$
Likewise, the encircled subdiagram 
\begin{center}
 \setlength{\unitlength}{3pt}
 \begin{picture}(35,3)(0,-1)
 \multiput(0,0)(5,0){8}{\circle*{1}}
 \put(15,0){\circle{1.9}}
 \put(0,0){\line(1,0){30}}
 \put(30,0.3){\line(1,0){5}}
 \put(30,-0.3){\line(1,0){5}}
 \put(32,-0.75){{\footnotesize{$\langle$}}}
 \put(22.5,0){\oval(30,4)}
 \end{picture}
\end{center}
corresponds to $\fg' = \fsp_{12}\bC$ and 
\[
  X_{2,14} \ = \ \{ E \in \mathrm{SG}(4,\bC^{16}) \ | \ F^2 \subset E \} 
  \ = \ \mathrm{SG}(2,(F^2)^\perp/F^2) \ \simeq \ \mathrm{SG}(2,\bC^{12}) \,.
\]

Returning to the general case, we have $\tti=r$ if and only if ${G/P}$ is a Hermitian symmetric space.  In this case, the $T^1 = T(G/P)$ so that every Schubert variety is a HSV.  Moreover, a Schubert variety is smooth if and only if it is the homogenous submanifold associated to a Dynkin subdiagram \cite{MR1703350}.

In contrast to the Hermitian symmetric case, if $\tti < r$ and $b = 2n-a-1$, then $X_{a,b}$ is smooth, but not homogeneous, cf.~\cite{MR2356323}.  These Schubert varieties are not horizontal.
\end{example}

\begin{remark}[Maximal parabolic associated to non--short root] \label{R:HM}
If $P$ is a maximal parabolic corresponding to a non--short simple root $\a_\tti$, then all smooth Schubert varieties of ${G/P}$ are homogeneously embedded, rational homogeneous varieties $X(\sD')$ corresponding to connected subdiagrams $\sD'\subset\sD$ containing the $\tti$--th node, cf.~\cite[Proposition 3.7]{HongMok2013}.  For example, the smooth Schubert varieties in the Grassmannian $\tGr(k,\bC^n)$ are all homogeneous.
\end{remark}

\subsection{Proof of Theorem \ref{T:smooth}: outline} \label{S:smprf}

The proposition is proved in three steps.  First we reduce to the case that $P$ is maximal (Section \ref{S:prf1}).  Then, the result \cite[Proposition 3.7]{HongMok2013} of Hong and Mok establishes the proposition in the case that the associated simple root is not short (Section \ref{S:prf2}).  Third, we address the short root case (Section \ref{S:prf3}).

\subsection{Reduce to the case that $P$ is maximal parabolic} \label{S:prf1}
Suppose that $X$ is a HSV.  Let $w \in W^\fp$ be the associated Weyl group element (Section \ref{S:schub}), so that $X = X_w = \overline{B w^{-1}o}$.  The condition that the Schubert variety be horizontal is equivalent to $\Delta(w) \subset \Delta(\fg_1)$ (Section \ref{S:HSV}).  By definition $\a \in\Delta(\fg_1)$ if and only if $\a(\ttE) = 1$.  Equivalently, $\a(\ttS^i) = 1$ for exactly one $i \in I$, and $\a(\ttS^j) = 0$ for all other $j \in I$.  Therefore, we have a disjoint union
$$
  \Delta(w) \ = \ \bigsqcup_{i \in I} \Delta_i(w)\,,
$$
where 
$$
  \Delta_i(w) \ \dfn \ \{ \a\in\Delta(w) \ | \ \a(\ttS^i) = 1 \} \,.
$$
It is straightforward to confirm that both $\Delta_i(w)$ and $\Delta^+ \backslash \Delta_i(w)$ are closed.  Therefore, $\Delta_i(w)$ determines a Schubert variety $X_i \subset G/P$, cf.~\cite[Remark 3.7(c)]{MR3217458}.  Since $\Delta_i(w) \subset \Delta(w)$ it follows from \cite[Lemma 8.1 \& Corollary 8.3]{MR3217458} that $X_i \subset X$.  Whence 
\[
  X \ = \ \prod_{i \in I} X_i \,.
\]

Let $\sD$ denote the Dynkin diagram of $G$.  Given $i \in I$, let $\sD \backslash \{ I \backslash\{i\}\}$ denote the subdiagram of $\sD$ obtained by removing all nodes corresponding to $j \in I \backslash\{i\}$ (and their adjacent edges).  Let $\sD_i$ denote the connected component of $\sD\backslash \{ I \backslash\{i\}\}$ containing the $i$-th node.  Let $G_i \subset G$ denote the closed, connected semisimple Lie subgroup of $G$ corresponding to $\sD_i$.  Let $\cO_i \subset G/P$ be the $G_i$--orbit of $o$.  Then $\cO_i \simeq G_i/(G_i\cap P)$ is rational homogeneous variety containing $X_i$ as a Schubert subvariety.  Moreover, $X_i$ is horizontal for the IPR on $\cO_i$.  Finally, note that $G_i\cap P$ is a maximal parabolic subgroup of $G_i$ -- the corresponding index set is just $\{i\}$.  Since $X$ is smooth if and only if each $X_i$ is smooth, to prove the proposition, it suffices to show that $X_i$ is a homogeneously embedded Hermitian symmetric space.  

This reduces the proof of the proposition to the case that $P$ is a maximal parabolic, which we now assume.  Let $\a_\tti$ denote the associated simple root.
 
\subsection{The case that $\a_\tti$ is not a short root} \label{S:prf2}
Suppose that $\a_\tti$ is not a short root of $G$.  By \cite[Proposition 3.7]{HongMok2013}, the Schubert variety $X \subset G/P_\tti$ is the homogeneously embedded, rational homogeneous subvariety corresponding to a subdiagram of $\sD$.  

\subsection{The case that $\a_\tti$ is a short root}\label{S:prf3}

In this case $G/P_\tti$ is one of the following five rational homogeneous varieties:
\begin{bcirclist}
\item 
Let $\n$ be a nondegenerate symmetric bilinear form on $\bC^{2r+1}$.  Then the \emph{orthogonal grassmannian} 
$$
  \mathrm{OG}(r,\bC^{2r+1}) \ = \ 
  \left\{ E \in \tGr(r,\bC^{2r+1}) \ \left| \ \left.\n\right|_E = 0 \right.\right\}
$$
of maximal, $\n$--isotropic subspaces is the rational homogeneous variety $B_r/P_r = \tSpin_{2r+1}\bC/P_r$, where $P_r$ is the maximal parabolic associated to the short simple root $\a_r$.
\item
Let $\n$ be a nondegenerate skew-symmetric bilinear form on $\bC^{2r}$.  Then the \emph{symplectic grassmannian} 
$$
  \mathrm{SG}(\tti,\bC^{2r}) \ = \ 
  \left\{ E \in \tGr(r,\bC^{2r+1}) \ \left| \ \left.\n\right|_E = 0 \right.\right\}
$$
of $\tti$--dimensional $\n$--isotropic subspaces is the rational homogeneous variety $C_r/P_\tti = \tSp_{2r}\bC/P_\tti$, where $P_\tti$ is the maximal parabolic associated to the simple root $\a_\tti$.  The simple root is short if and only if $\tti < r$.
\item The exceptional $F_4/P_3$, $F_4/P_4$ or $G_2/P_1$.
\end{bcirclist}
Unfortunately, the Hong--Mok argument does not appear to extend to $\mathrm{SG}(\tti,\bC^{2r})$, $F_4/P_3$ or $G_2/P_1$.  Instead, we will see that, for each of the five ${G/P}$ above, Theorem \ref{T:smooth} follows from either (i) the Brion--Polo classification of smooth minuscule Schubert varieties, or (ii) existing descriptions of the HSV, or a combination of both.

\subsubsection{The case that ${G/P} = \mathrm{OG}(r,\bC^{2r+1})$}

In this case, ${G/P}$ is minuscule, cf.~\cite{MR1782635}.  Brion and Polo have shown that the smooth Schubert varieties of minuscule $G/P$ are homogeneous submanifolds.  More precisely, let $Q \supset B$ denote the stabilizer of $X$, cf.~Remark \ref{R:schubStab}.  Then $X = Q\cdot o$ by \cite[Proposition 3.3(a)]{MR1703350}.  

Let $\fq = \fq^0 \op \fq^+$ be the graded decomposition \eqref{E:p} associated to associated to the parabolic $\fq$ (and choices $\fq \supset \fb \supset \fh$).  If $Q^0 \subset Q$ is the closed Lie subgroup with Lie algebra $\fq^0$, then $Q \simeq Q^0 \times \fq^+ = Q^0 \times \texp(\fq^+)$, by \cite[Theorem 3.1.3]{MR2532439}.  Therefore, $X = Q \cdot o = Q^0 \cdot o = Q^0_\mathrm{ss}\cdot o$, where $Q^0_\mathrm{ss}$ is the closed semisimple Lie subgroup with Lie algebra $\fq^0_\mathrm{ss} = [\fq^0,\fq^0]$.  The semisimple $Q^0_\mathrm{ss}$ is the subgroup of $G$ associated to the subdiagram $\sD' = \{ \a \in \sD \ | \ \fg^\a \in \fq^0 \}$ (Section \ref{S:grelem}).  Thus, $X = X(\sD')$ is the homogeneously embedded, rational homogeneous subvariety associated to a subdiagram $\sD'\subset\sD$.

\subsubsection{The case that ${G/P} = \mathrm{SG}(\tti,\bC^{2r})$}

Recall that $\tti < r$.  In this case, the marked Dynkin diagram associated to $\mathrm{SG}(\tti,\bC^{2r})$ is 
\begin{center}
\setlength{\unitlength}{3pt}
\begin{picture}(45,2)(0,-1)
 \multiput(0,0)(5,0){2}{\circle*{1}}
 \put(0,0){\line(1,0){5}}
 \multiput(8,0)(2,0){3}{\circle*{0.4}}
 \multiput(15,0)(5,0){3}{\circle*{1}}
 \put(15,0){\line(1,0){10}}
 \multiput(28,0)(2,0){3}{\circle*{0.4}}
 \multiput(35,0)(5,0){3}{\circle*{1}}
 \put(35,0){\line(1,0){5}}
 \put(40,0.3){\line(1,0){5}}
 \put(40,-0.3){\line(1,0){5}}
 \put(42,-0.675){{\scriptsize{$\langle$}}}
 \put(-0.8,1.5){\footnotesize{1}}
 \put(4.2,1.5){\footnotesize{2}}
 \put(19.2,1.5){\footnotesize{$\tti$}}
 \put(44.2,1.5){\footnotesize{$r$}}
 \put(20,0){\circle{1.9}}
\end{picture}
\end{center}
It follows from the proof of \cite[Proposition 3.11]{MR3217458}\footnote{See, in particular, Step 3 of Section 7.3 in \cite{MR3217458}; in the case under consideration, $t = 1$ and $i_t = \tti$.} that there is a unique maximal Schubert variation of Hodge structure; it is the homogeneous submanifold $Z' \simeq \tGr(\tti,\bC^{r})$ associated to the circled subdiagram 
\begin{center}
\setlength{\unitlength}{3pt}
\begin{picture}(45,2)(0,-1)
 \multiput(0,0)(5,0){2}{\circle*{1}}
 \put(0,0){\line(1,0){5}}
 \multiput(8,0)(2,0){3}{\circle*{0.4}}
 \multiput(15,0)(5,0){3}{\circle*{1}}
 \put(15,0){\line(1,0){10}}
 \multiput(28,0)(2,0){3}{\circle*{0.4}}
 \multiput(35,0)(5,0){3}{\circle*{1}}
 \put(35,0){\line(1,0){5}}
 \put(40,0.3){\line(1,0){5}}
 \put(40,-0.3){\line(1,0){5}}
 \put(42,-0.675){{\scriptsize{$\langle$}}}
 \put(20,0){\circle{1.9}}
 \put(20,0){\oval(43,3)}
\end{picture}
\end{center}
So, any smooth HSV in $\mathrm{SG}(\tti,\bC^{2r})$ is a smooth Schubert variety of $\tGr(\tti,\bC^r)$.  These are well-known to be precisely the homogeneously embedded, rational homogeneous subvarieties corresponding to connected subdiagrams of the the marked Dynkin diagram for $\tGr(\tti,\bC^r)$ that contain the $\tti$--th node, cf.~\cite[Section 9.3]{MR1782635}.  (Alternatively, this follows from the Brion--Polo result, since $\tGr(\tti,\bC^r)$ is minuscule, or Remark \ref{R:HM}.)  

\subsubsection{The case that ${G/P} = F_4/P_3$}

By \cite[Example 5.16]{MR3217458}, the only Schubert variations of Hodge structure are (the trivial $o \in G/P$ and) the $\bP^1 \subset \bP^2$ corresponding to the two subdiagrams
\begin{center}
 \setlength{\unitlength}{4pt}
 \begin{picture}(15,1)(0,-1)
 \multiput(0,0)(5,0){4}{\circle*{1}}
 \put(0,0){\line(1,0){5}}
 \put(5,0.3){\line(1,0){5}}
 \put(5,-0.3){\line(1,0){5}}
 \put(10,0){\line(1,0){5}}
 \put(6.5,-0.6){{\footnotesize{$\rangle$}}}
 \put(10,0){\circle{1.9}}
 \put(10,0){\oval(4.5,3)}
 \end{picture}
 \hsp{40pt}
 \begin{picture}(15,1)(0,-1)
 \multiput(0,0)(5,0){4}{\circle*{1}}
 \put(0,0){\line(1,0){5}}
 \put(5,0.3){\line(1,0){5}}
 \put(5,-0.3){\line(1,0){5}}
 \put(10,0){\line(1,0){5}}
 \put(6.5,-0.6){{\footnotesize{$\rangle$}}}
 \put(10,0){\circle{1.9}}
 \put(12.5,0){\oval(9,3)}
 \end{picture}
\end{center}
Therefore, any smooth HSV in $F_4/P_3$ is a homogeneously embedded, rational homogeneous subvariety corresponding to a subdiagram of $\sD$.  

\subsubsection{The case that $G/P = F_4/P_4$}
By \cite[Example 5.17]{MR3217458}, the only Schubert variations of Hodge structure are (the trivial $o \in {G/P}$ and) the $\bP^1 \subset \bP^2$ corresponding to the two subdiagrams
\begin{center}
 \setlength{\unitlength}{4pt}
 \begin{picture}(15,1)(0,-1)
 \multiput(0,0)(5,0){4}{\circle*{1}}
 \put(0,0){\line(1,0){5}}
 \put(5,0.3){\line(1,0){5}}
 \put(5,-0.3){\line(1,0){5}}
 \put(10,0){\line(1,0){5}}
 \put(7,-0.6){{\footnotesize{$\rangle$}}}
 \put(15,0){\circle{1.9}}
 \put(15,0){\oval(4.5,3)}
 \end{picture}
 \hsp{40pt}
 \begin{picture}(15,1)(0,-1)
 \multiput(0,0)(5,0){4}{\circle*{1}}
 \put(0,0){\line(1,0){5}}
 \put(5,0.3){\line(1,0){5}}
 \put(5,-0.3){\line(1,0){5}}
 \put(10,0){\line(1,0){5}}
 \put(6.5,-0.6){{\footnotesize{$\rangle$}}}
 \put(15,0){\circle{1.9}}
 \put(12.5,0){\oval(9,3)}
 \end{picture}
\end{center}
Therefore, any smooth HSV in $F_4/P_4$ is a homogeneously embedded, rational homogeneous subvariety corresponding to a subdiagram of $\sD$.  
\subsubsection{The case ${G/P} = G_2/P_1$}
This variety is the quadric hypersurface $\cQ^5 \subset \bP^6$.  By \cite[Example 5.30]{MR3217458}, the only HSV are (the trivial $o \in {G/P}$ and) the smooth $\bP^1 \subset {G/P}$ which is the homogeneous submanifold corresponding to the circled subdiagram
\begin{center}
 \setlength{\unitlength}{4pt}
 \begin{picture}(15,1)(0,-1)
 \multiput(0,0)(5,0){2}{\circle*{1}}
 \put(0,0){\line(1,0){5}}
 \put(0,0.3){\line(1,0){5}}
 \put(0,-0.3){\line(1,0){5}}
 \put(2.5,-0.6){{\footnotesize{$\langle$}}}
 \put(0,0){\circle{1.7}}
 \put(0,0){\oval(4,3)}
 \end{picture}
\end{center}
Therefore, any smooth HSV in $G_2/P_1$ is a homogeneously embedded, rational homogeneous subvariety corresponding to a subdiagram of $\sD$.  

\subsection{Fini} \label{S:prf4}

We have shown, in Sections \ref{S:prf1}--\ref{S:prf3}, that given any rational homogeneous variety $G/P$ (here the parabolic is arbitrary -- $P$ need not be maximal) any smooth HSV $X\subset {G/P}$ is a product homogeneously embedded, rational homogeneous subvarieties $X_i = X(\sD_i)$ corresponding to a subdiagram of $\sD_i \subset \sD$.  The condition that $X$ (and therefore each $X_i$) be horizontal forces $X_i$ to be Hermitian symmetric.  This completes the proof of Theorem \ref{T:smooth}.  

\begin{remark}
It is possible that \cite[Theorem 2.6]{MR1703350} can be used to prove Theorem \ref{T:smooth}, as it was used to establish the homogeneity of minuscule and cominuscule Schubert varieties in \cite{MR1703350}.  The arguments of \cite{MR1703350} are representation theoretic in nature; in the proof of Theorem \ref{T:smooth}, we elected for the more geometric approach of \cite{HongMok2013}.
\end{remark}

\section{Lines on flag varieties} \label{S:lines}

Throughout this section we 
\begin{quote}
\emph{assume $P$ is a maximal parabolic subgroup, and identify $G/P$ with its image under the minimal homogeneous embedding $G/P \inj \bP V$}
\end{quote}
of Section \ref{S:rhv}.  The main result of this section is Proposition \ref{P:Xo} which characterizes the flag manifolds $G/P$ with the property that the Schubert variety swept out by the horizontal lines through a point is itself horizontal.  The proposition is applied in \cite{KR1} to study 

  Fix a highest weight vector $0\not=v \in V$, so that $[v] = o \in \bP V$ is the highest weight line.  By \eqref{E:p}, the tangent space $T_o(G/P)$ is naturally identified with $\fg/\fp$ as a $\fp$--module, and with $\fg^-$ as a $\fg^0$--module; for the most part, we will work with the latter identification.  The set of embedded, linear $\bP^1 \subset \bP V$ containing $o$ and tangent to $G/P$ at that point is in bijection with $\bP \fg^- = \bP\,T_o(G/P)$.  To be precise, given a tangent line $[\xi] \in \bP \fg^-$, we have 
$$
  \bP^1 \ = \ \bP^1(o,[\xi]) \ \dfn\ \bP \, \tspan_\bC\{v , \xi(v)\} \subset \bP V \,.
$$  
Making use of this identification, let 
$$
  \tilde \cC_o \ \dfn \ \{ [\xi] \in \bP \,\fg^- \ | \ \bP^1(o,[\xi]) \subset G/P \}
  \ = \ \{ \bP^1 \subset \bP V \ | \ o \in \bP^1 \subset G/P \} 
$$ 
be the set of lines on $G/P$ passing through $o$.  (For a general embedding $G/P \inj \bP V$, not necessarily minimal, $\tilde \cC_o$ is defined to be the variety of minimal rational tangents, cf.~\cite{MR1748609}.)  The subvariety of lines tangent to $\fg^{-1} \subset \fg^- \simeq T_o(G/P)$ is
\begin{equation}\label{E:Co}
  \cC_o \ \dfn \ \{ [\xi] \in \tilde \cC_o \ | \ \xi \in \fg^{-1} \} 
  \ = \ \{ \bP^1 \subset (G/P) \ | \ \bP^1 \ni o 
  \hbox{ is horizontal}\}\,.
\end{equation}

Let 
\begin{equation}\label{E:Xo}
  X \ \dfn \ \bigcup_{\bP^1 \in \cC_o} \bP^1
\end{equation}
be the variety swept out by the lines that pass through $o$ and are horizontal.

\begin{proposition} \label{P:Xo}
Assume that $P$ is a maximal parabolic subgroup of $G$.  Then 
\begin{a_list_emph}
\item $X$ is a cone over $\cC_o$ with vertex $o$ and a Schubert variety.  
\item $X$ is horizontal if and only if the simple root $\a_\tti$ associated with the maximal parabolic $\fp$ is not short.
\end{a_list_emph}
\end{proposition}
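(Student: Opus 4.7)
The plan is to prove Part (a) using the representation-theoretic description of $\cC_o$, and to reduce Part (b) to the bracket condition $\tad(f_\eta)^2|_{\fg^0} = 0$ for an appropriate $\eta$.

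\emph{Part (a).} By \eqref{E:Xo}, $X$ is the union of projective lines each passing through $o$, so it is visibly a cone with vertex $o$ over $\cC_o$. To identify $X$ as a Schubert variety I would verify that it is a closed, irreducible, $B$-stable subvariety of $G/P$ containing $o$. Since $P$ is maximal, $\fg^{-1}$ is irreducible as a $\fg^0$-module, and $\cC_o \subset \bP\,\fg^{-1}$ is the closed $G^0$-orbit (the orbit of the highest weight line $[f_{\a_\tti}]$), hence irreducible and closed. The parabolic $P$ acts on $T_o(G/P) \simeq \fg^-$ through its Levi quotient $G^0$, so $P$ stabilizes $\cC_o$ and therefore also $X$; in particular $B \subset P$ stabilizes $X$. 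Closedness and irreducibility of $X$ then follow from those of $\cC_o$ via the parametrization $\cC_o \times \bP^1 \to X$.

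\emph{Part (b), setup.} Fix $\eta$ with $[f_\eta] \in \cC_o$. The line-existence condition $\langle \m,\eta^\vee\rangle = 1$ combined with $\m = \w_\tti$ forces $(\eta,\eta) = (\a_\tti,\a_\tti)$, so $\eta$ is the same length as $\a_\tti$. At the smooth point $y = \exp(f_\eta)\,o \in X$ (smooth since $\cC_o$ is homogeneous and $y \neq o$), identify $T_y(G/P) \simeq \fg/\fp$ via left translation by $\exp(f_\eta)$. Then $T_y X$ is spanned by the line direction $f_\eta \in \fg^{-1}$ together with the variational directions coming from $\fg^0$-deformations of $[f_\eta]$ in $\cC_o$, which for $\xi \in \fg^0$ take the form $\tAd(\exp(-f_\eta))\,\xi$ modulo $\fp$. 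Expanding
\[
  \tAd(\exp(-f_\eta))\,\xi
  \ = \ \sum_{k \ge 0}\, \tfrac{(-1)^k}{k!}\,\tad(f_\eta)^k\,\xi
\]
and dropping the $k=0$ summand (which lies in $\fg^0 \subset \fp$), the $k$-th summand lies in $\fg^{-k}$. Since $T^1_y$ corresponds to $\fg^{-1}$ in this identification, the horizontality condition $T_y X \subset T^1_y$ is equivalent to $\tad(f_\eta)^k\,\xi = 0$ for all $k \ge 2$ and all $\xi \in \fg^0$, which (since vanishing at $k = 2$ propagates to all higher $k$) reduces to the single condition $\tad(f_\eta)^2|_{\fg^0} = 0$.

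\emph{Part (b), case analysis.} If $\a_\tti$ is long then $\eta$ is long, and for any $\gamma \in \Delta(\fg^0)$ we have $m_\tti(\gamma) = 0$, so $\gamma \neq \pm\eta$; a standard length estimate (the squared length of $\gamma - 2\eta$ exceeds that of any root when $\eta$ is long and $\gamma$ is a non-proportional root) shows $\gamma - 2\eta \not\in \Delta$, whence $\tad(f_\eta)^2\,e_\gamma = 0$. Together with the trivial vanishing on $\fh$ this gives $\tad(f_\eta)^2|_{\fg^0} = 0$, and $X$ is horizontal. If $\a_\tti$ is short then $\eta$ is short, and I would exhibit a long root $\gamma \in \Delta(\fg^0)$ with $\gamma - 2\eta \in \Delta$; this forces $\tad(f_\eta)^2\,e_\gamma$ to be a nonzero vector in $\fg^{\gamma - 2\eta} \subset \fg^{\le -2}$, so $T_y X \not\subset T^1_y$ and $X$ is not horizontal. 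The main obstacle is producing such a pair $(\eta,\gamma)$ in each of the five short-root flag varieties, namely $B_n/P_n$, $C_n/P_\tti$ with $\tti < n$, $F_4/P_3$, $F_4/P_4$, and $G_2/P_1$; although a uniform argument is available (a short $\a_\tti$ in a non-simply-laced root system admits a long $\gamma$ with $m_\tti(\gamma) = 0$ paired with a short $\eta \in \Delta(\fg_1)$ satisfying $\langle \gamma,\eta^\vee\rangle \ge 2$), the most transparent verification is a short case-by-case inspection of the combinatorics of $\Delta(\fg_1)$ and $\Delta(\fg^0)$ in each type.
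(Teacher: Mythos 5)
Your proposal is correct in outline, but it takes a genuinely different route from the paper's on both parts. For (a), the paper obtains the cone structure and the Schubert property simultaneously from the Tits correspondence: $X=\cT(\Sigma)$ for $\Sigma=\tau(\eta^{-1}(o))$ (Lemma \ref{L:pt}, Corollary \ref{C:pt}), and Lemma \ref{L:tits} says Tits transforms of Schubert varieties are Schubert varieties. Your direct route (closed, irreducible, $B$--stable, hence the closure of its dense $B$--orbit since $G/P$ has finitely many) also works; one small correction: $P$ does \emph{not} act on all of $T_o(G/P)\simeq\fg^-$ through its Levi quotient (the unipotent radical acts nontrivially on the lower graded pieces), though it does act through $G^0$ on $T^1_o\simeq\fg^{-1}$, where $\cC_o$ lives -- and in any case $P$ visibly permutes the horizontal lines through $o$, which is all you need. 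For (b), the paper stays entirely inside the Weyl group: it identifies the minimal representative $w=(\tti)v$ indexing $X$, shows $\Delta(w)=\{\a_\tti\}\cup(\tti)\Delta(v)$, and notes that $(\tti)\a=\a-2\tfrac{(\a,\a_\tti)}{(\a_\tti,\a_\tti)}\,\a_\tti$ contributes more than $1$ to $\varrho_w(\ttE)$ for some $\a\in\Delta(v)$ exactly when $\a_\tti$ is short, so the horizontality criterion $\varrho_w(\ttE)=|w|$ of \S\ref{S:HSV} fails precisely in the short case. Your tangent--space computation at $y=\exp(f_\eta)o$ and the reduction to $\tad(f_\eta)^2|_{\fg^0}=0$ is a legitimate alternative (identifying $T_yX$ with the image of the differential of the parametrization is justified because the points $\exp(\xi)o$, $\xi$ in the affine cone over $\cC_o$, form a single $G^0$--orbit, open, dense and smooth in $X$), and your length estimate settles the non--short case uniformly. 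What you gain is a geometric picture; what you give up is uniformity in the short case, where you still owe, for each of the five short--root $G/P$, a long root $\gamma$ with $m_\tti(\gamma)=0$ and $\langle\gamma,\eta^\vee\rangle\ge 2$ -- a routine verification, but one that the paper's reflection formula dispatches in a single line.
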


\noindent The proposition is proved in Section \ref{S:linespt}.

\subsection{Properties of $\cC_0$} \label{S:Pmax}

We now recall two properties of $\cC_o$ in the case that $P$ is maximal (equivalently, $I = \{\tti\}$ and $\ttE = \ttS^\tti$).  The results that follow are due to \cite{MR1966752}, where $\cC_o$ and $\tilde\cC_o$ are discussed for arbitrary (not necessarily maximal) $P$.
\begin{a_list_bold}
\item  
   Since $P$ is maximal, $\fg^{-1}$ is an irreducible $\fg^0$--module with highest weight line $\fg^{-\a_\tti}$, cf.  \cite[Theorem 8.13.3]{MR928600}.  The variety of lines $\cC_o \subset \bP \fg^{-1}$ is the $G^0$--orbit of this highest weight line, cf.~\cite[Theorem 4.3]{MR1966752}.  In particular, $\cC_o$ is a rational homogeneous variety; indeed, 
$$
  \cC_o \ \simeq \ G^0/(G^0\cap Q)\,,
$$ 
where $Q \supset B$ is the parabolic subgroup defined by 
\begin{equation} \label{E:Iq}
  I(\fq) \ = \ \{ j \ | \ \fg^{-\a_j} \not\subset \fq\} 
  \ \dfn \ \{ j \ | \ \langle \a_\tti , \a_j \rangle \not=0\} \,.
\end{equation}
That is, the simple roots indexed by $I(\fq)$ are those adjacent to $\a_\tti$ in the Dynkin diagram of $\fg$; cf.~\cite[Proposition 2.5]{MR1966752}.  With only a few exceptions, $\cC_o$ is a $G_0$--cominuscule variety; equivalently, $\cC_o \simeq G^0/(G^0\cap Q)$ admits the structure of a compact Hermitian symmetric space, cf.~\cite[Proposition 2.11]{MR1966752}.  
\item
If the simple root $\a_\tti$ associated with the maximal parabolic $P$ is not short, then $\cC_o = \tilde \cC_o$, cf.~\cite[Theorem 4.8.1]{MR1966752}.  If the simple root is short, then $\tilde \cC_o$ is the union of two $P$--orbits, and open orbit and its boundary $\cC_o$.  
\end{a_list_bold}

\subsection{Uniruling of $G/P$} \label{S:uniruling}

We continue with the assumption that $P$ is maximal; however, analogous statements follow for unirulings on general $G/P$.  For the more general statements, it is convenient to use Tits correspondences, which are briefly reviewed in Section \ref{S:tits}.

Given $x = g o \in G/P$, with $g \in G$, let $\cC_x = g \cC_o$ denote the corresponding set of lines through $x$. (It is an exercise to show that $\cC_x$ is well-defined; that is, $\cC_x$ does not depend on our choice of $g$ yielding $x = go$.)  Then 
$$
  \cC \ \dfn \ \{ \bP^1 \ | \ \bP^1 \in \cC_x \,,\ x\in G/P\} 
  \ = \ \bigcup_{g \in G} g \,\cC_o \,.
$$
forms a uniruling of $G/P$.  (As will be shown in Corollary \ref{C:pt}, this uniruling is parameterized by $G/Q$ -- that is, $\cC \simeq G/Q$.)  

\begin{remark}\label{R:tC}
More generally, the set of \emph{all} lines on $G/P$ is $\tilde \cC = \cup_{g\in G} \, g \,\tilde\cC_o$.
\begin{a_list_emph}
\item  It follows from definition \eqref{E:Co} and the homogeneity of the IPR, that 
\begin{equation} \label{E:Cogen}
  \cC \ = \ \{ \bP^1 \in \tilde\cC \ | \ \bP^1 \hbox{ is horizontal}\}
\end{equation}
is precisely the set of lines on $G/P$ that are integrals of the IPR.
\item  As noted in Section \ref{S:Pmax}(b), if the simple root associated to the maximal parabolic $P$ is not short, then $\tilde \cC = \cC$ consists of a single $G$--orbit.  If the simple root is short, then $\tilde\cC$ consists of two $G$--orbits, an open orbit and its boundary $\cC$, cf.~\cite[Theorem 4.3]{MR1966752}.
\end{a_list_emph}
\end{remark}

\subsection{Tits correspondences} \label{S:tits} 

Tits correspondences describe homogeneous unirulings of a rational homogeneous variety $G/P$ by homogeneously embedded, rational homogeneous subvarieties $G'/P'$; these unirulings may be used to clarify the geometry of $G/P$.  The material in Sections \ref{S:tits_schub}--\ref{S:linespt} is taken from \cite{MR3130568, CR2}.  Given two standard parabolics $P$ and $Q=P_J$, the intersection $P \cap Q$ is also a standard parabolic.  (Note that $I(\fp\cap\fq) = I(\fp) \cup I(\fq)$.)  There is a natural double fibration, called the \emph{Tits correspondence}, given by the diagram in Figure \ref{f:tits};  here the maps $\eta$ and $\tau$ are the natural projections. 
\begin{figure}[h]
\caption{Tits correspondence}
\setlength{\unitlength}{4pt}
\begin{picture}(33,12)(0,0)
\put(0,0){$G/P$} 
\put(10,6.8){\vector(-1,-1){4}}
\put(10.5,8.5){$G/(P\cap Q)$}
\put(23,6.8){\vector(1,-1){4}}
\put(28,0){$G/Q$}
\put(6,5.5){$\eta$}
\put(25.5,5){$\tau$}
\end{picture}
\label{f:tits}
\end{figure}
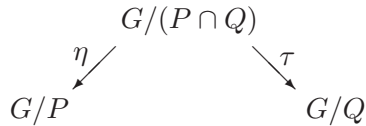
Given a subset $\Sigma\subset G/Q$, the \emph{Tits transform} is $\mathcal{T}(\Sigma) : = \eta(\tau^{-1}(\Sigma))$.  

\subsection{Tits transform of a point} \label{S:tits_pt}

The Tits transform of a point $y \in G/Q$ will play a crucial r\^ole in our discussion of $G/P$ and its Schubert varieties.   What follows is a brief review of \cite[\S2.7.1]{MR2090671}.\footnote{There is a typo in \cite[\S2.7.1]{MR2090671}; on page 87, $S$ and $S'$ should be swapped.  This is corrected for in the discussion above.}

The Tits transform $\cT(Q/Q)$ of the point $Q/Q \in G/Q$ is the $G'$--orbit $G'(P/P) \subset G/P$, where $G'$ is the semisimple subgroup of $G$ whose Dynkin diagram $\sD'$ is obtained from the diagram $\sD$ of $G$ by removing the nodes $I(\fq)\backslash I(\fp)$.  Therefore, $\cT(Q/Q) \simeq G'/P'$, where $P' = G' \cap P$ is the parabolic subgroup of $G'$ with index set \eqref{E:I} given by $I(\fp') = I(\fp) \backslash I(\fq)$.  

Since any point $y \in G/Q$ is of the form $y = g Q/Q$ for some $g \in G$, and $\cT(g Q/Q) = g\cT(Q/Q)$, it follows that $\cT(y) \simeq G'/P'$ and 
\begin{subequations} \label{SE:tits}
\begin{equation} \label{E:titsa}
\begin{array}{c}
\hbox{\emph{$G/P$ is uniruled by subvarieties $\Sigma$ isomorphic to $G'/P'$,}} \\ 
\hbox{\emph{and the uniruling is parameterized by $G/Q$.}}
\end{array}
\end{equation}
Moreover, $G/(P\cap Q)$ is the incidence space for this uniruling.  Precisely, 
\begin{equation} \label{E:incidence}
  G/(P\cap Q) \ = \ \{ (x,\Sigma) \in G/P \times G/Q \ | \ x \in \Sigma \}\,.
\end{equation}
\end{subequations}

\begin{remark}[$\bP^1$--unirulings of $G/P$ for maximal $P$] \label{R:P1}
In the case that $P \subset G$ is a maximal parabolic (Section \ref{S:rhv}), there is a unique $G$--homogeneous variety $G/Q$ parameterizing a uniruling of $G/P$ by lines $\bP^1$; it may be identified by inspection of the Dynkin diagram $\sD$ of $\fg$ as follows.  The maximality of $P$ is equivalent to $I(\fp) = \{\tti\}$ for some $\tti$.  In order to obtain a uniruling by $\bP^1$s, we must choose $Q$ (equivalently, the index set $I(\fq)$) so that $G'/P' = \bP^1$.  To that end, let $J = \{ j \not= \tti \ | \ (\a_\tti,\a_j) \not=0 \}$ index the nodes in the Dynkin diagram that are adjacent to the $\tti$--th node.  Then $G'/P' \simeq \bP^1$ (equivalently, $G/Q$ parameterizes a uniruling of $G/P$ by $\bP^1$s) if and only if $J \subset I(\fq)$.  When $I(\fq) = J$ we say that \emph{$G/Q$ is the smallest rational $G$--homogeneous variety parameterizing a uniruling of $G/P$ by lines $\bP^1$}.
\end{remark}

\subsection{Tits transform of a Schubert variety} \label{S:tits_schub}

The Tits transform preserves Schubert varieties; that is, if $Y \subset G/Q$ is a Schubert variety, then so is $X = \cT(Y) = G/P$.  Moreover, $X$ may be determined as follows.  Let $\sW^\fq_\tmin$ be the set of (unique) minimal length representatives of the right cosets $\sW_\fq \backslash \sW$, and let $\sW^\fq_\mathrm{max} \subset \sW$ denote this set of (unique) maximal length representatives.  If $w_0$ is the longest element of $\sW_\fq$, then $\sW^\fq_\tmax = \{ w_0 w \ | \ w \in \sW^\fq_\tmin\}$.  We have $\sW_\fq \backslash \sW \simeq \sW^\fq_\tmin \simeq \sW^\fq_\tmax$.  A proof of the following well--known lemma is given in \cite{MR3130568}.

\begin{lemma} \label{L:tits}
Let $w \in \sW^\fq_\tmax$, and let $Y_w \subset G/Q$ denote the Schubert variety indexed by the coset $\sW_\fq w$.  Then the Tits transformation $\cT(Y_w)$ is the Schubert variety $X_w \subset G/P$ indexed by the coset $\sW_\fp w$.
\end{lemma}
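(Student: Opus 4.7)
The plan is to trace $B$--orbits through the Tits double fibration
\[
  G/P \ \xleftarrow{\ \eta\ } \ G/(P \cap Q) \ \xrightarrow{\ \tau\ } \ G/Q,
\]
using the bijection between $B$--orbits on these flag varieties and right cosets of the respective Weyl subgroups in $\sW$.

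First I would show that the preimage $\tau^{-1}(Y_w)$ is itself a Schubert variety in $G/(P\cap Q)$, namely the closure of the single $B$--orbit
$Z_w \dfn B w^{-1}\cdot\bigl((P\cap Q)/(P\cap Q)\bigr)$. Since $\tau$ is a proper, $G$--equivariant fibration with connected (hence irreducible) fibers isomorphic to $Q/(P\cap Q)$, the preimage $\tau^{-1}(Y_w)$ is irreducible; being $B$--stable and closed, it is the closure of a unique $B$--orbit. The orbit $Z_w$ clearly lies in $\tau^{-1}(Y_w)$ and surjects under $\tau$ onto the open $B$--orbit of $Y_w$, so density reduces to the dimension identity
\[
  \tdim\,Z_w \ = \ \tdim\,Y_w \,+\, \tdim\bigl(Q/(P\cap Q)\bigr).
\]

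With this in place, applying $\eta$ is immediate: since $\eta$ is proper and $B$--equivariant, it commutes with closures of $B$--invariant subsets, and so
\[
  \cT(Y_w) \ = \ \eta\bigl(\overline{Z_w}\bigr) \ = \ \overline{\eta(Z_w)} \ = \ \overline{B w^{-1}\cdot (P/P)} \ = \ X_w,
\]
the Schubert variety of $G/P$ indexed by the coset $\sW_\fp w$.

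The main obstacle is the dimension identity, and this is precisely where the maximal-length hypothesis $w \in \sW^\fq_\tmax$ enters essentially. Writing $w = w_\fq\, w^*$ with $w^* \in \sW^\fq_\tmin$ the minimal representative of $\sW_\fq w$ (so $|w| = |w_\fq| + |w^*|$, which is the defining property of $\sW^\fq_\tmax$), and decomposing the longest element as $w_\fq = w_{\fp\cap\fq}\,\tilde w$ where $\tilde w$ is the longest minimal representative of $\sW_{\fp\cap\fq}\backslash \sW_\fq$, the standard additivity $|\sigma w^*| = |\sigma| + |w^*|$ for $\sigma \in \sW_\fq$ yields $|w| = |w_{\fp\cap\fq}| + |\tilde w w^*|$. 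A parallel additivity check shows that $\tilde w w^*$ is minimal in $\sW_{\fp\cap\fq} w$, so by \eqref{E:len=dim}
\[
  \tdim\,Z_w \ = \ |\tilde w w^*| \ = \ |w^*| + \bigl(|w_\fq| - |w_{\fp\cap\fq}|\bigr) \ = \ \tdim\,Y_w + \tdim\bigl(Q/(P\cap Q)\bigr),
\]
as required. The remaining verifications are routine consequences of the Bruhat decomposition and the properness of $\eta$ and $\tau$.
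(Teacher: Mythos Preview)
The paper does not actually prove this lemma; it states that the result is well--known and cites \cite{MR3130568} for a proof. Your argument is a correct, self--contained proof along the standard lines: track a single $B$--orbit through the double fibration, use properness and irreducibility of the fibers to identify $\tau^{-1}(Y_w)$ as a Schubert variety, and verify by a length computation that the relevant orbit $Z_w$ is dense in it. The length identities you invoke (additivity $|\sigma w^*| = |\sigma| + |w^*|$ for $\sigma \in \sW_\fq$, and the decomposition $w_\fq = w_{\fp\cap\fq}\tilde w$ with $|\tilde w| = |w_\fq| - |w_{\fp\cap\fq}| = \tdim\,Q/(P\cap Q)$) are standard consequences of the parabolic factorization of Weyl groups, and your check that $\tilde w w^* \in \sW^{\fp\cap\fq}_\tmin$ follows exactly as you say. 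The final step, $\eta(\overline{Z_w}) = \overline{\eta(Z_w)}$, is the usual closed--map argument for proper morphisms. Nothing is missing.
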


\subsection{Lines through a point} \label{S:linespt}

The following, first observed in \cite{CR2}, is an immediate consequence of \eqref{SE:tits}.

\begin{lemma} \label{L:pt}
Suppose that $G/Q$ parameterizes a uniruling of $G/P$ by $\bP^1$s.  Let $\Sigma = \tau(\eta^{-1}(o)) \subset G/Q$ denote the image of $o \in G/P$ under the Tits transform, and let $\cT(\Sigma) = \eta(\tau^{-1}(\Sigma)) \subset G/P$ denote the image of $\Sigma$ under the Tits transform back to $G/P$.  Then $\Sigma$ is the set of lines in $G/P$ that are parameterized by $G/Q$ and pass through $o \in G/P$.  Likewise, 
$$
  \cT(\Sigma) \ = \ \bigcup_{o \in \bP^1 \in G/Q} \bP^1 
$$
is the subset of $G/P$ swept out by these lines, and is naturally identified with a cone $\cC(\Sigma)$ over $\Sigma$ with vertex $o$.
\end{lemma}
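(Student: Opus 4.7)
The proof is essentially formal, unwinding the incidence description \eqref{E:incidence} of the Tits correspondence. The plan is as follows.

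First I would identify $\Sigma$. By \eqref{E:incidence}, the space $G/(P\cap Q)$ is the variety of pairs $(x,\bP^1)\in G/P\times G/Q$ with $x\in\bP^1$, and the maps $\eta,\tau$ are the first and second projections. Consequently $\eta^{-1}(o)=\{(o,\bP^1)\mid o\in\bP^1,\ \bP^1\in G/Q\}$, and
\[
\Sigma \;=\; \tau(\eta^{-1}(o)) \;=\; \{\bP^1\in G/Q \mid o\in\bP^1\},
\]
which is the set of lines (in the uniruling parameterized by $G/Q$) passing through $o$. This establishes the first assertion.

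Second, I would compute $\cT(\Sigma)$ by the same bookkeeping:
\[
\tau^{-1}(\Sigma) \;=\; \{(x,\bP^1)\in G/(P\cap Q) \mid o\in\bP^1\},
\]
so applying $\eta$ yields $\cT(\Sigma)=\bigcup_{o\in\bP^1\in G/Q}\bP^1$, which is the claimed subset of $G/P$.

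For the cone description, I would use the restriction $\tau\colon\tau^{-1}(\Sigma)\to\Sigma$, which presents $\tau^{-1}(\Sigma)$ as a $\bP^1$-bundle over $\Sigma$ (each fiber being the line $\bP^1$ it indexes, viewed inside $G/P$ via $\eta$). Since every member of $\Sigma$ passes through $o$, there is a tautological section $s\colon\Sigma\to\tau^{-1}(\Sigma)$, $\bP^1\mapsto(o,\bP^1)$. Under $\eta$, this section collapses to the point $o\in G/P$, while on any fiber $\eta$ restricts to the inclusion $\bP^1\hookrightarrow G/P$. Contracting $s(\Sigma)\subset\tau^{-1}(\Sigma)$ to a point therefore produces a space which maps bijectively onto $\cT(\Sigma)$; this is precisely the cone $\cC(\Sigma)$ over $\Sigma$ with vertex $o$, and gives the required natural identification.

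The only non-formal input is that two distinct lines $\bP^1,\bP^1{}'\in\Sigma$ meet only at $o$ inside $G/P$, which is what rules out further collapses beyond $s(\Sigma)$ and ensures the map $\cC(\Sigma)\to\cT(\Sigma)$ is a bijection rather than merely a surjection. This is the main point to check carefully, and it can be deduced from the description of $\cC_o$ in Section \ref{S:Pmax}(a): distinct lines through $o$ correspond to distinct highest-weight-type tangent directions in $\bP\fg^{-1}$, so are linearly independent at $o$ and hence share no other point of the line. With this in hand, the three assertions of the lemma follow from the setup of Section \ref{S:tits} with no further work.
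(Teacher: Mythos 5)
Your argument is correct and takes essentially the same route as the paper, which states Lemma \ref{L:pt} without a separate proof, calling it an ``immediate consequence'' of the incidence description \eqref{SE:tits}; your unwinding of $\eta^{-1}(o)$ and $\tau^{-1}(\Sigma)$ is precisely that consequence made explicit. Your added check that two distinct linear $\bP^1$'s through $o$ meet only at $o$ (so the contraction of the tautological section maps bijectively onto $\cT(\Sigma)$) is a detail the paper leaves implicit, and it is right.
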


\begin{remark} \label{R:pt}
By Lemma \ref{L:tits}, the variety $\cT(\Sigma)$ is a Schubert variety $X_w \subset G/P$.
\end{remark}

\begin{corollary} \label{C:pt}
Let $P$ be a maximal parabolic and let $Q$ be the parabolic of Section \ref{S:Pmax}(a).  Then  the uniruling $\cC$ of $G/P$ (Section \ref{S:uniruling}) is is precisely the uniruling $G/Q$ obtained through the Tits correspondence.  In particular, the variety $\Sigma$ of Lemma \ref{L:pt} is the variety $\cC_o$ of \eqref{E:Co}, and the variety $\cT(\Sigma)$ of Lemma \ref{L:pt} is the variety $X$ of \eqref{E:Xo}.  
\end{corollary}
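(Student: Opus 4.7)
The plan is to reduce the corollary to two observations: (i) the parabolic $Q$ defined in Section \ref{S:Pmax}(a) coincides with the one singled out in Remark \ref{R:P1} as indexing the smallest $\bP^1$-uniruling of $G/P$ by the Tits correspondence; and (ii) under this Tits correspondence the parameter variety $\Sigma = \tau(\eta^{-1}(o)) \subset G/Q$ of Lemma \ref{L:pt} is $G^0$-equivariantly identified with $\cC_o \subset \bP\fg^{-1}$, with the associated lines through $o$ matching those of \eqref{E:Co}. The first observation is immediate: when $P$ is maximal with $I(\fp)=\{\tti\}$, the index set \eqref{E:Iq} is precisely the set of simple roots adjacent to $\a_\tti$, which is exactly the index set of Remark \ref{R:P1}. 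So the Tits correspondence yields the same $G$-homogeneous parameter space $G/Q$ that governs $\cC$, provided we can match lines through $o$.

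For the second observation, first compute $\Sigma = P \cdot (eQ) \subset G/Q$. Since $Q \supset B$, every positive root space of $\fg$ lies in $\fq$; in particular $\fg^+ \subset \fq$, so the unipotent radical $U^+ = \texp(\fg^+)$ of $P$ sits inside $Q$. The Levi decomposition $P = G^0 \cdot U^+$ then gives $\Sigma = G^0 \cdot (eQ) \simeq G^0/(G^0 \cap Q)$, which is precisely the description of $\cC_o$ in Section \ref{S:Pmax}(a). To pass from parameter spaces to the lines themselves, I would invoke Section \ref{S:tits_pt}: the Tits-line $\cT(eQ)$ is modeled on the Dynkin subdiagram obtained from $\sD$ by removing $I(\fq)\setminus I(\fp)$, which leaves the $\tti$-th node isolated, so $\cT(eQ) \simeq \tSL_2/B \simeq \bP^1$ with tangent at $o$ equal to the highest weight line $\fg^{-\a_\tti} \subset \fg^{-1}$. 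Since $G^0$ preserves the grading \eqref{SE:grading}, translating by $g \in G^0$ produces a line through $o$ with tangent direction $\tAd(g)\,\fg^{-\a_\tti} \subset \fg^{-1}$; hence every line in the Tits uniruling through $o$ is horizontal. Conversely, by Section \ref{S:Pmax}(a), $\cC_o$ is the $G^0$-orbit of $[\fg^{-\a_\tti}] \in \bP\fg^{-1}$, so every horizontal line through $o$ arises this way. Thus $\Sigma = \cC_o$ and $\cT(\Sigma) = X$ by \eqref{E:Xo}.

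The main obstacle is not substantive but bookkeeping: one must verify that the stabilizer in $G^0$ of the highest weight line $[\fg^{-\a_\tti}] \in \bP\fg^{-1}$ equals $G^0 \cap Q$, so that the two $G^0$-homogeneous models of $\cC_o$ (as a subvariety of $G/Q$ and as the orbit in $\bP\fg^{-1}$) agree on the nose. This follows from the definition \eqref{E:Iq} of $I(\fq)$, since a root $\b$ with $\b(\ttE) = 0$ fixes the weight $-\a_\tti$ line exactly when $\langle\b,\a_\tti\rangle = 0$, which is the complement in $\sS(\fg^0)$ of $I(\fq) \cap \{j : j \neq \tti\}$; once this match of stabilizers is in hand, the rest of the argument is formal.
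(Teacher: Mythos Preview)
Your proposal is correct and follows essentially the same route as the paper: both identify the parabolic $Q$ of \S\ref{S:Pmax}(a) with that of Remark~\ref{R:P1}, and then show $\Sigma \simeq G^0/(G^0\cap Q) \simeq \cC_o$ --- the paper via the Tits-transform recipe of \S\ref{S:tits_pt}, you via the Levi decomposition $P = G^0\cdot U^+$ with $U^+ \subset Q$, which amounts to the same computation.  Your additional check that the tangent directions of the Tits lines through $o$ lie in $\fg^{-1}$ is a welcome explicitness the paper leaves implicit; and the ``obstacle'' you flag (matching the $G^0$-stabilizer of $[\fg^{-\a_\tti}]$ with $G^0\cap Q$) is already part of the cited content of \S\ref{S:Pmax}(a), so no further work is needed there.
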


\begin{proof}
By the maximality of $P$ we have $I(\fp) = \{\tti\}$, \cf Section \ref{S:rhv}.  Recall the Levi subgroup $G^0 \subset G$ whose Lie algebra $\fg^0$ is the zero--eigenspace of the grading element $\ttE = \ttS^\tti$ associated with the parabolic $\fp$, \cf Section \ref{S:rhv}.  Then the simple roots of the semisimple Lie algebra $\fg^0_\tss = [\fg^0,\fg^0]$ are $\sS \backslash \{\a_\tti\}$, the simple roots of $\fg$ minus the $\tti$--th.   As discussed in Section \ref{S:Pmax}(a), we have $\cC_o = G^0 /(G^0 \cap Q)$.  It follows that $\cC_o = G^0_\mathrm{ss} /(G^0_\mathrm{ss} \cap Q)$, where $G^0_\tss \subset G^0$ is the semisimple subgroup with Lie algebra $\fg^0_\tss$. Note the index set $I(\fg^0_\mathrm{ss} \cap \fq)$ associated with the parabolic $G^0_\mathrm{ss} \cap Q$ by \eqref{E:I} is $I(\fq)$.

As discussed in Section \ref{S:Pmax}(a), the set $I(\fq)$ indexes those nodes of the Dynkin diagram that are are adjacent to the $\tti$--th node.  By Remark \ref{R:P1}, $G/Q$ is the minimal rational $G$--homogeneous variety parameterizing a uniruling of $G/P$ by lines $\bP^1$s.   The Tits transform $\Sigma = \cT(P/P) \subset G/Q$ is of the form $G'/P'$.  From the descriptions of $G'$ and $P'$ in Section \ref{S:tits_pt} and the discussion of Remark \ref{R:P1} we see that $G' = G^0_\mathrm{ss}$ and $P' = G^0_\mathrm{ss} \cap Q$.  Thus, $\cC_o = \Sigma$.
\end{proof}

\begin{proof}[Proof of Proposition \ref{P:Xo}]
Part (a) of the lemma follows directly from Lemma \ref{L:pt} and Corollary \ref{C:pt}.  The argument for part (b) is based on the Tits transform recipe given by Lemma \ref{L:tits} and the characterization of horizontal Schubert varieties \cite{KR2}.  We will make use (without explicit mention) of observations made in the proof of Corollary \ref{C:pt}.

To begin, let $P \subset G$ be a maximal parabolic with index set $I(\fp) = \{\tti\}$, \cf Section \ref{S:rhv}.  Let $G/Q$ parameterize the uniruling of $G/P$ by $\bP^1$s.  The right coset indexing the Schubert variety $o = P/P \in G/P22$ is $\sW_\fp w_0$, where $w_0$ is the longest word of the Weyl group $\sW_\fp$ of $\fg^0$.  By Lemma \ref{L:tits}, the Schubert variety $\Sigma = \cT(P/P)$ is indexed by the coset $\sW_\fq w_0$.  Let $w_1 \in \sW^\fq_\tmax$ be the longest representative of $\sW_\fq w_0 = \sW_\fq w_1$.  Again, Lemma \ref{L:tits} implies $\cT(\Sigma)$ is the Schubert variety indexed by the right coset $\sW_\fp w_1$.  By Corollary \ref{C:pt}, this is the Schubert variety $X$.  Let $w \in \sW^\fp_\tmin$ be a shortest representative of $\sW_\fp w_1 = \sW_\fp w$.  As discussed in Section \ref{S:HSV}, the Schubert variety $X$ is horizontal if and only if $\varrho_w(\ttE) = \tdim\,X$.  So the substance of the proof is to compute the integer $\varrho_w(\ttE)$.  Note that $\ttE = \ttS^\tti$, \cf\eqref{E:ttE}.

Let $v \in \sW_\tmin^\fq$ be the shortest Weyl element indexing the Schubert variety $\Sigma$.  Let $w_0' \in \sW_\fq$ be the longest element of the Weyl subgroup, so that $w_0' v = w_1$ is the longest element indexing $\Sigma$.   Then
$$
  \sW_\fq w_0 \ = \ \sW_\fq v \ = \ \sW_\fq w_0'v\,,
$$
and $\sW_\fp w_0' v$ indexes $X = \cT(\Sigma)$.  We claim that 
\begin{equation} \label{E:c1}
  \sW_\fp w_0' \ = \ \sW_\fp (\tti) \,,
\end{equation}
where $(\tti) \in \sW$ denotes the reflection associated with the simple root $\a_\tti$.  It follows from \eqref{E:c1} that 
$$
  (\tti) v \ = \ w \ \in \ \sW^\fp_\tmin
$$
is the shortest Weyl group element indexing  $X$.  

\begin{proof}[Proof of \eqref{E:c1}]
Observe that $\sW_\fp$ is generated by the simple reflections $\{ (j) \ | \ j \not=\tti\}$.  Likewise, $\sW_\fq$ is generated by the simple reflections $\{ (j) \ | \ j \not= \tti-1,\tti+1\}$.  In particular, any element $u$ of $\sW_\fq$ may be written as $u' (\tti)$ with $u' \in \sW_\fp$.  The claim follows.
\end{proof}

We return to the proof of Proposition \ref{P:Xo}(b).  From the discussion of Section \ref{S:schub} (specifically \eqref{E:len=dim}) we see that 
$$
  |v| \ = \ |\Delta(v)| \ = \ \tdim\,\Sigma \tand
  1 \,+\, |v| \ = \ |w| \ = \ |\Delta( w )| \ = \ \tdim\,X \,.
$$
The argument establishing \cite[Proposition 3.2.14(5)]{MR2532439} yields
$$
  \Delta(w) \ = \ \{ \a_\tti \} \ \cup \ (\tti) \Delta(v) \,.
$$
Since $v$ indexes a \emph{homogeneous} Schubert variety $\Sigma = G'/P'$, we see from the discussion of Section \ref{S:schub} that 
$$
  \Delta(v) \ = \ \{ \a \in \Delta(\fg^0) \ | \ \a(\ttF) > 0 \}\,,
$$
where 
$$
  \ttF \ = \ \sum_{i \in I(\fq)} \ttS^i
$$
is the grading element \eqref{E:ttE} associated with the parabolic $\fq$.  Since $\fg^0$ is the zero--eigenspace of $\ttE = \ttS^\tti$, this may be rewritten as 
$$
  \Delta(v) \ = \ \{ \a \in \Delta \ | \ \a(\ttE) = 0 \,,\ \a(\ttF) > 0 \} \,.
$$
In particular, the roots of $\Delta(v)$ are precisely those positive roots $\a = m^i\a_i$ such that $m^\tti=0$ and $m^j > 0$ for some $j \in I(\fq)$.  Informally, the root $\a$ does not involve the simple root $\a_\tti$, but does involve some simple root adjacent to $\a_\tti$.  Whence the reflection
$$
  (\tti)\a \ = \ \a \ - \ 2\frac{(\a,\a_\tti)}{(\a_\tti,\a_\tti)} \,\a_\tti
$$
has the property that the integer $-2(\a,\a_\tti)/(\a_\tti,\a_\tti) \ge 1$ for every $\a \in \Delta(v)$.  It now follows from the second equality of \eqref{E:dfn_rho} that $\varrho_w(\ttE) > |w|$ if and only if $\a_\tti$ is short.
\end{proof}

\appendix
\section{Dynkin diagrams} \label{S:dynkin}
For the readers convenience we include in Figure \ref{f:dynkin} the Dynkin diagrams of the complex simple Lie algebras.  Recall that: each node corresponds to a simple root $\a_i \in \sS$; two nodes are connected if and only if $\langle \a_i , \a_j \rangle \not=0$ and in this case the number if edges is $|\a_i|^2/|\a_j|^2 \ge 1$ (that is, $i,j$ are ordered so that the inequality holds).  Below, if $G = B_r$, then $r \ge 3$; and if $G = D_r$, then $r \ge 4$.
%
%
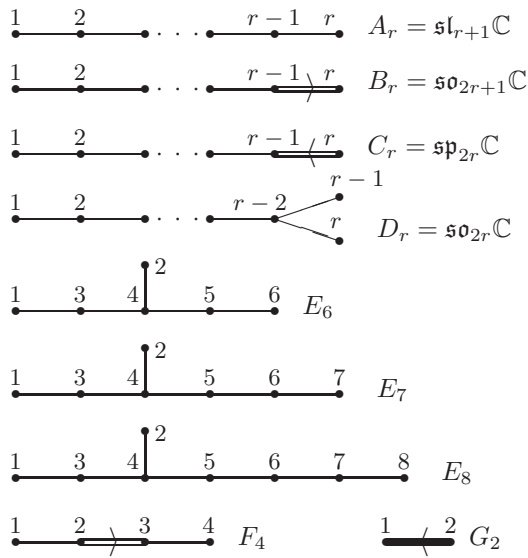
\begin{figure}[!ht] 
\caption{Dynkin diagrams for complex simple Lie algebras.}
\begin{center}
\setlength{\unitlength}{3.5pt}
\begin{picture}(50,65)(0,-5)
\multiput(0,55)(7,0){3}{\circle*{0.8}}
\put(0,55){\line(1,0){14}}
\multiput(15.5,55)(2,0){3}{\circle*{0.3}}
\multiput(21,55)(7,0){3}{\circle*{0.8}}
\put(21,55){\line(1,0){14}}
\put(-0.7,56){\footnotesize{1}}
\put(6.3,56){\footnotesize{2}}
\put(25,56){\footnotesize{$r-1$}}
\put(33.3,56){\footnotesize{$r$}}
\put(38,55){\small{$A_r = \fsl_{r+1}\bC$}}
\multiput(0,49)(7,0){3}{\circle*{0.8}}
\put(0,49){\line(1,0){14}}
\multiput(15.5,49)(2,0){3}{\circle*{0.3}}
\multiput(21,49)(7,0){3}{\circle*{0.8}}
\put(21,49){\line(1,0){7}}
\multiput(28,48.75)(0,0.5){2}{\line(1,0){7}}
\put(31.3,48.4){$\rangle$}
\put(-0.7,50){\footnotesize{1}}
\put(6.3,50){\footnotesize{2}}
\put(25,50){\footnotesize{$r-1$}}
\put(33.3,50){\footnotesize{$r$}}
\put(38,49){\small{$B_r = \fso_{2r+1}\bC$}}
\multiput(0,42)(7,0){3}{\circle*{0.8}}
\put(0,42){\line(1,0){14}}
\multiput(15.5,42)(2,0){3}{\circle*{0.3}}
\multiput(21,42)(7,0){3}{\circle*{0.8}}
\put(21,42){\line(1,0){7}}
\multiput(28,41.75)(0,0.5){2}{\line(1,0){7}}
\put(31.3,41.4){$\langle$}
\put(-0.7,43){\footnotesize{1}}
\put(6.3,43){\footnotesize{2}}
\put(25,43){\footnotesize{$r-1$}}
\put(33.3,43){\footnotesize{$r$}}
\put(38,42){\small{$C_r = \fsp_{2r}\bC$}}
\multiput(0,35)(7,0){3}{\circle*{0.8}}
\put(0,35){\line(1,0){14}}
\multiput(15.5,35)(2,0){3}{\circle*{0.3}}
\multiput(21,35)(7,0){2}{\circle*{0.8}}
\put(21,35){\line(1,0){7}}
\multiput(35,32.67)(0,4.66){2}{\circle*{0.8}}
\put(28,35){\line(3,1){7}}\put(28,35){\line(3,-1){7}}
\put(-0.7,36){\footnotesize{1}}
\put(6.3,36){\footnotesize{2}}
\put(23.5,36){\footnotesize{$r-2$}}
\put(34,38.5){\footnotesize{$r-1$}}
\put(34,34){\footnotesize{$r$}}
\put(39,33){\small{$D_r = \fso_{2r}\bC$}}
\multiput(0,25)(7,0){5}{\circle*{0.8}}
\put(0,25){\line(1,0){28}}
\put(14,30){\circle*{0.8}} 
\put(14,30){\line(0,-1){5}} 
\put(-0.7,26){\footnotesize{1}}
\put(6.3,26){\footnotesize{3}}
\put(15,29){\footnotesize{2}}
\put(12,26){\footnotesize{4}}
\put(20.3,26){\footnotesize{5}}
\put(27.3,26){\footnotesize{6}}
\put(31,24.7){\small{$E_6$}}
\multiput(0,16)(7,0){6}{\circle*{0.8}}
\put(0,16){\line(1,0){35}}
\put(14,21){\circle*{0.8}} \put(14,21){\line(0,-1){5}}
\put(-0.7,17){\footnotesize{1}}
\put(6.3,17){\footnotesize{3}}
\put(15,20){\footnotesize{2}}
\put(12,17){\footnotesize{4}}
\put(20.3,17){\footnotesize{5}}
\put(27.3,17){\footnotesize{6}}
\put(34.3,17){\footnotesize{7}}
\put(39,15.7){\small{$E_7$}}
\multiput(0,7)(7,0){7}{\circle*{0.8}}
\put(0,7){\line(1,0){42}}
\put(14,12){\circle*{0.8}} \put(14,12){\line(0,-1){5}}
\put(-0.7,8){\footnotesize{1}}
\put(6.3,8){\footnotesize{3}}
\put(15,11){\footnotesize{2}}
\put(12,8){\footnotesize{4}}
\put(20.3,8){\footnotesize{5}}
\put(27.3,8){\footnotesize{6}}
\put(34.3,8){\footnotesize{7}}
\put(41.3,8){\footnotesize{8}}
\put(46,6.7){\small{$E_8$}} 
\multiput(0,0)(7,0){4}{\circle*{0.8}} 
\put(0,0){\line(1,0){7}}
\put(7,0.3){\line(1,0){7}} \put(7,-0.3){\line(1,0){7}}
\put(10,-0.7){$\rangle$}
\put(14,0){\line(1,0){7}} \put(24,-0.3){\small{$F_4$}}
\put(-0.7,1){\footnotesize{1}}
\put(6.3,1){\footnotesize{2}}
\put(13.3,1){\footnotesize{3}}
\put(20.2,1){\footnotesize{4}}
\multiput(40,0)(7,0){2}{\circle*{0.8}}
\put(40,0){\line(1,0){7}}\put(40,0.3){\line(1,0){7}}
\put(40,-0.3){\line(1,0){7}}
\put(43.5,-0.7){$\langle$} \put(49,-0.3){\small{$G_2$}}
\put(39.3,1){\footnotesize{1}}
\put(46.3,1){\footnotesize{2}}
\end{picture}
\label{f:dynkin}
\end{center}
\end{figure}
%

\bibliography{../LaTex/refs.bib}	
\bibliographystyle{plain}			

\end{document}